%
%
%

\documentclass[graybox,envcountsame,envcountresetchap]{svmult}

\usepackage{graphicx}

\usepackage{amsmath}
\usepackage{amssymb}
\usepackage{color}

\usepackage[latin1]{inputenc}
\usepackage{amsfonts}
\usepackage{tikz} 
\usetikzlibrary{matrix}
\usepackage[english]{babel}
\usepackage[latin1]{inputenc}
\usepackage[T1]{fontenc}
\usepackage{ae}

\usepackage{url}


\allowdisplaybreaks[4]

\DeclareMathOperator{\sign}{sign}
\DeclareMathOperator{\GL}{GL}



\usepackage{color}
\usepackage{tikz}
\usetikzlibrary{matrix}
\usepackage{cite}           
\usepackage{mathptmx}       
\usepackage{helvet}         
\usepackage{courier}        
\usepackage{type1cm}        
%
\usepackage{makeidx}         
\usepackage{graphicx}        
\usepackage{multicol}        
\usepackage[bottom]{footmisc}

\newcommand{\C}{\mathbb{C}}

\newcommand{\Q}{\mathbb{Q}}
\newcommand{\Z}{\mathbb{Z}}
\newcommand{\z}{\zeta}
\newcommand{\al}{\alpha}
\newcommand{\be}{\beta}
\newcommand{\ga}{\gamma}
\newcommand{\la}{\lambda}
\newcommand{\g}{{\mathfrak g}}
\newcommand{\de}{\delta}
\renewcommand{\th}{\theta}

\newcommand{\G}{\Gamma}

\newcommand{\psmm}[4]{\left(\begin{smallmatrix}{#1}&{#2}\\{#3}&{#4}\end{smallmatrix}\right)}

\DeclareMathOperator{\lcm}{lcm}

\renewcommand{\E}{{\mathcal E}}
\renewcommand{\H}{\mathfrak H}
\newcommand{\M}{\mathcal M}

\DeclareMathOperator{\Res}{Res}

\newcommand{\ov}{\overline}
\newcommand{\leg}[2]{\mbox{$\left(\dfrac{#1}{#2}\right)$}}

\newcommand{\fp}{\qed}

\makeatletter
\def\renewtheorem#1{%
  \expandafter\let\csname#1\endcsname\relax
  \expandafter\let\csname c@#1\endcsname\relax
  \gdef\renewtheorem@envname{#1}
  \renewtheorem@secpar
}
\def\renewtheorem@secpar{\@ifnextchar[{\renewtheorem@numberedlike}{\renewtheorem@nonumberedlike}}
  \def\renewtheorem@numberedlike[#1]#2{\newtheorem{\renewtheorem@envname}[#1]{#2}}
  \def\renewtheorem@nonumberedlike#1{
    \def\renewtheorem@caption{#1}
    \edef\renewtheorem@nowithin{\noexpand\newtheorem{\renewtheorem@envname}{\renewtheorem@caption}}
    \renewtheorem@thirdpar
  }
  \def\renewtheorem@thirdpar{\@ifnextchar[{\renewtheorem@within}{\renewtheorem@nowithin}}
    \def\renewtheorem@within[#1]{\renewtheorem@nowithin[#1]}
    \makeatother

\renewtheorem{theorem}{Theorem}[section]
\renewtheorem{remarks}[theorem]{{\it Remarks}}
\smartqed


\makeindex             


\begin{document}

\title*{Expansions at Cusps and Petersson Products in Pari/GP}
\author{Henri Cohen}
\institute{Universit\'e de Bordeaux,\\
  Institut de Math\'ematiques de Bordeaux (IMB)\\
  UMR 5251 du CNRS, Equipe LFANT INRIA,\\
  351 Cours de la Lib\'eration, 33405 Talence Cedex\\
\email{Henri.Cohen@math.u-bordeaux.fr}}
%
%
\maketitle

\abstract{We begin by explaining how to compute Fourier expansions at all
  cusps of any modular form of integral or half-integral weight thanks to
  a theorem of Borisov--Gunnells and explicit expansions of Eisenstein
  series at all cusps. Using this, we then give a number of
  methods for computing arbitrary Petersson products. All this is available in
  the current release of the {\tt Pari/GP} package.}

%
%

\section{Introduction}\label{sec:1}

\vspace{1mm}\noindent

In this paper we consider the practical problem of numerically computing
Petersson products of two modular forms whenever it is defined.
In some cases this can be done using the Rankin--Selberg convolution of
the forms, but in general this is not always possible nor practical.

We will describe three methods. The first is applicable when both forms
are cusp forms, and is a variant of the well-known formulas of Haberland.
The second is a modification of the first, necessary when at least one
of the forms is not a cusp form. Both of these methods need the essential
condition that the weight $k$ be integral and greater than or equal to $2$.
The third method is due to P.~Nelson and D.~Collins. It has the great
advantage of being also applicable when $k=1$ or $k$ half-integral, but
the great disadvantage of being much slower when $k$ is integral and
greater than or equal to $2$.

All of these methods require the possibility of computing the Fourier
expansion of $f|_k\ga$ for an arbitrary $\ga$ in the full modular group.
The method used in {\tt Pari/GP} is to express any modular form (possibly
multiplied by a known Eisenstein or theta series) as a linear combination
of products of \emph{two} Eisenstein series, which is always possible
thanks to a theorem of Borisov--Gunnells \cite{Bor-Gun} \cite{Bor-Gun2},
so we will begin by studying this in detail here, so that the formulas can be
recorded.

\section{Eisenstein Series}

\subsection{Introduction}

In the sequel, we let $\chi_1$ and $\chi_2$ be two \emph{primitive}
characters modulo $N_1$ and $N_2$ respectively. For $k\ge3$ we define
$$G_k(\chi_1,\chi_2)(\tau)=\dfrac{1}{2}\sum_{N_1\mid c}\dfrac{\ov{\chi_1(d)}\chi_2(c/N_1)}{(c\tau+d)^k}\;,$$
and for $k=2$ and $k=1$ we define $G_k$ by analytic continuation to $s=0$
of the same sum with an extra factor $|c\tau+d|^{-2s}$ (Hecke's trick).
We will always assume that $\chi_1\chi_2(-1)=(-1)^k$, otherwise the series
is identically zero.

If $k\ne2$ or $k=2$ and $\chi_1$ and $\chi_2$ are not both trivial, then
$G_k\in M_k(\G_0(N_1N_2),\chi_1\chi_2)$ (if $k=2$ and $\chi_1$ and $\chi_2$
are both trivial we have a nonanalytic term in $1/\Im(\tau)$). The Fourier
expansion at infinity is given by
$$G_k(\chi_1,\chi_2)(\tau)=\left(\dfrac{-2\pi i}{N_1}\right)^k\dfrac{\g(\ov{\chi_1})}{(k-1)!}F_k(\chi_1,\chi_2)(\tau)\;,$$
where $\g(\ov{\chi})$ is the standard Gauss sum associated to $\ov{\chi}$,
$$F_k(\chi_1,\chi_2)(\tau)=\de_{N_2,1}\dfrac{L(\chi_1,1-k)}{2}+\sum_{n\ge1}\sigma_{k-1}(\chi_1,\chi_2,n)q^n\;,$$
where $\de$ is the Kronecker delta, and
$$\sigma_{k-1}(\chi_1,\chi_2,n)=\sum_{d\mid n,\ d>0}d^{k-1}\chi_1(d)\chi_2(n/d)\;.$$
By convention, we will set $F_0=1$.

An important theorem of Borisov--Gunnells \cite{Bor-Gun} \cite{Bor-Gun2}
says that in weight
$k\ge3$, and very often also in weight $2$, any modular form
$f\in M_k(\G_0(N),\chi)$ is a linear combination of
$F_{\ell}(\chi_1,\chi_2)(e\tau)F_{k-\ell}(\chi'_1,\chi'_2)(e'\tau)$ for
suitable characters $\chi$, $\ell$, $e$ and $e'$.

If we are in the unfavorable case of the theorem (only in weight $2$), or
in weight $1$, we can simply multiply by a known Eisenstein series
(of weight $1$ or $2$) to be in a case where the theorem applies. Similarly,
if we are in half-integral weight, we simply multiply by a suitable power
of $\th\in M_{1/2}(\G_0(4))$ to be able to apply the theorem.

For us, the main interest of this theorem is that the Fourier expansion of
$F_k|_k\ga$ as well as that of $\th|_{1/2}\ga$ can be explicitly computed for
all $\ga\in\G$, the full modular group, so this allows us to compute
$f|_k\ga$ for any modular form $f$, and in particular find the Fourier
expansions at any cusp.

\subsection{Expansion of $F_k|_k\ga$}

As usual we denote by $N_1$ and $N_2$ the conductors of $\chi_1$ and $\chi_2$.
For simplicity of notation we will set
$F_k(\chi_1,\chi_2,e)(\tau):=F_k(\chi_1,\chi_2)(e\tau)$ and
$N=N_1N_2e$, so that $F_k(\chi_1,\chi_2,e)\in M_k(\G_0(N),\chi_1\chi_2)$.
Note that in the application using the Borisov--Gunnells theorem $N$ will
only be a \emph{divisor} of the level.

We now let $\ga\in \GL_2^+(\Q)$ be any matrix with rational coefficients
and strictly positive determinant. We want to compute the Fourier
expansion at infinity of $F_k(\chi_1,\chi_2,e)|_k\ga$. For this, we first
make three reductions. First, trivially the action of $\ga$ is homogeneous,
so possibly after multiplying $\ga$ by a common denominator we may assume
that $\ga=\psmm{A}{B}{C}{D}\in M_2^+(\Z)$. Second, by Euclid we can
find integers $u$, $v$, and $g$ such that $\gcd(A,C)=g=uA+vC$, and we have
the matrix identity
$$\begin{pmatrix}A&B\\C&D\end{pmatrix}=\begin{pmatrix}A/g&-v\\C/g&u\end{pmatrix}\begin{pmatrix}g&uB+vD\\0&(AD-BC)/g\end{pmatrix}\;,$$
where we note that the first matrix is in $\G$. Since the second one is
upper triangular, its action on a Fourier expansion is trivial to write
down, so we are reduced to the case where $\ga\in\G$.

The third and last reduction is based on the following easy lemma:

\begin{lemma} Let $\ga\in\G$. There exist $\be\in\G_0(N)$ and $m\in\Z$
  such that $$\ga=\be\begin{pmatrix}A&B\\C&D\end{pmatrix}T^m$$
  with $C\mid N$, $C>0$, and $N\mid B$.\end{lemma}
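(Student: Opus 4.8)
The plan is to extract the required data from $\ga=\psmm{a}{b}{c}{d}$ in three successive moves, acting on the left by $\G_0(N)$ and on the right by powers of $T=\psmm{1}{1}{0}{1}$. Throughout I use only that $\gcd(a,c)\mid ad-bc=1$, so $\gcd(a,c)=1$, and that $T^{t}\in\G_0(N)$ for every $t$.

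\emph{Step 1: bring the lower-left entry to $C:=\gcd(c,N)$.} Note $C\mid N$ and $C>0$ in all cases (when $c=0$ this reads $C=N$). Writing a candidate $\be_1^{-1}=\psmm{s}{-q}{-Nr}{p}\in\G_0(N)$, the bottom row of $\be_1^{-1}\ga$ is $(pc-Nra,\ pd-Nrb)$, so I need integers $p,r$ with $pc-Nra=C$ and with $(-Nr,p)$ completable to an $\SL_2(\Z)$-matrix, i.e.\ $\gcd(p,Nr)=1$. Since $\gcd(a,c)=1$ one has $\gcd(c,Na)=\gcd(c,N)=C$, so $pc-Nra=C$ has solutions; setting $c=Cc''$ (so $\gcd(c'',N/C)=1$) and dividing by $C$ turns it into $pc''-(N/C)ra=1$, which already forces $\gcd(p,r)=\gcd(p,N/C)=1$. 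It remains to secure $\gcd(p,C)=1$: the substitution $(p,r)\mapsto(p+(N/C)at,\ r+c''t)$ preserves the relation, and for a prime $\ell\mid C$ either $\ell\mid(N/C)a$ — in which case $\ell\mid N/C$ (as $\gcd(a,C)=1$), hence $\ell\nmid p$ and so $\ell\nmid p+(N/C)at$ automatically — or $\ell\nmid(N/C)a$, in which case some residue of $t$ modulo $\ell$ avoids $\ell\mid p+(N/C)at$. Choosing $t$ by the Chinese Remainder Theorem over the primes of the second type yields $\gcd(p,N)=1$, hence $\gcd(p,Nr)=1$; complete $(-Nr,p)$ to $\be_1^{-1}\in\G_0(N)$ and set $\ga_1:=\be_1^{-1}\ga=\psmm{A_1}{B_1}{C}{D_1}$, where $A_1D_1-B_1C=1$.

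\emph{Steps 2 and 3: fix the upper-left entry, then kill $B$.} From $A_1D_1-B_1C=1$ we get $\gcd(A_1,C)=1$. Left-multiplying $\ga_1$ by $T^{t}$ replaces $A_1$ by $A_1+tC$ while fixing the bottom row $(C,D_1)$, and the same Chinese-Remainder argument (automatic at primes dividing $C$, free at the others) produces $t$ with $\gcd(A_1+tC,N)=1$; write $\ga_2:=T^{t}\ga_1=\psmm{A_2}{B_2}{C}{D_1}$ with $\gcd(A_2,N)=1$. Right-multiplying by $T^{m}$ sends $\ga_2$ to $\psmm{A_2}{A_2m+B_2}{C}{Cm+D_1}$, again with lower-left entry $C$; since $\gcd(A_2,N)=1$, choose $m$ with $A_2m\equiv-B_2\pmod N$. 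Then $\sigma:=\ga_2T^{m}$ has the shape $\psmm{A}{B}{C}{D}$ with $C\mid N$, $C>0$, $N\mid B$ and $\det\sigma=1$. Unwinding the moves, $\sigma=T^{t}\be_1^{-1}\ga T^{m}$, so $\ga=(\be_1T^{-t})\,\sigma\,T^{-m}$; since $\be:=\be_1T^{-t}\in\G_0(N)$, this is the asserted factorization, the integer of the statement being $-m$.

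The only step with genuine content is Step 1: one must check that the obvious Bézout solution of $pc-Nra=\gcd(c,N)$ can be nudged so that the resulting partial matrix really completes to an element of $\G_0(N)$ — equivalently, that the right coset $\G_0(N)\ga$ has a representative whose lower-left entry is exactly $\gcd(c,N)$. Once that is in hand, Steps 2 and 3 are immediate, and — pleasantly — the vanishing or sign of $c$ never needs separate treatment.
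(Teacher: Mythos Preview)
The paper states this as an ``easy lemma'' and gives no proof, so there is nothing to compare against; your argument is correct and self-contained. The three-step structure---first left-multiply by an element of $\G_0(N)$ to force the lower-left entry to be exactly $\gcd(c,N)$, then left-translate by $T^t\in\G_0(N)$ to make the upper-left entry coprime to $N$, then right-translate by $T^m$ to kill $B$ modulo $N$---is the natural one, and your handling of the coprimality conditions in Step~1 via the CRT adjustment $(p,r)\mapsto(p+(N/C)at,\,r+c''t)$ is clean. The observation that the degenerate case $c=0$ (where $C=N$, $c''=0$, $N/C=1$, and $p$ is initially unconstrained) is absorbed into the general argument is also correct: the substitution still lets you adjust $p$ freely modulo each prime of $N$.
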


Since the action of $\be=\psmm{a}{b}{c}{d}\in\G_0(N)$ on $M_k(\G_0(N),\chi)$
is simply multiplication by $\chi(d)$, and since once again the action of the
translation $T^m$ is trivial to write down on Fourier expansions, this lemma
allows us to reduce to $\ga\in\G$ with the additional conditions $C\mid N$,
$C>0$, and $N\mid B$.

To state the main result we need to introduce an additional function needed
to express the constant terms:

\begin{definition} Let $\chi$ be a Dirichlet character modulo $M$, let
  $f$ be its conductor, and let $\chi_f$ be the primitive character modulo
  $f$ equivalent to $\chi$. We define
  $$S_k(\chi)=(M/f)^k\g(\chi_f)\dfrac{\ov{B_k(\chi_f)}}{k}\prod_{p\mid N}\left(1-\dfrac{\chi_f(p)}{p}\right)\;,$$
  where as usual $B_k(\chi_f)$ is the $\chi_f$-Bernoulli number.
\end{definition}

Note that $S_k(\chi)=-(2(k-1)!M^k/(-2\pi i)^k)L(\chi,k)$, but we have preferred
to give it in the above form to emphasize the fact that it belongs to a
specific cyclotomic field.

We are now ready to state the main result, where we always use the
convention $q^x=e^{2\pi i\tau x}$ when $x\in\Q$:
  
\begin{theorem} Set $N=eN_1N_2$, and let $\ga=\psmm{A}{B}{C}{D}\in\G$ be such
that $C\mid N$, $C>0$, and $N\mid B$. Set $g=\gcd(e,C)$, $g_1=\gcd(N_1g,C)$,
and $g_2=\gcd(N_2g,C)$. If $(k,\chi_1,\chi_2)\ne(2,1,1)$ we have
$$F_k(\chi_1,\chi_2,e)|_k\ga=\dfrac{1}{z_k(\chi_1,\chi_2,C)}\sum_{n\ge0}a_{\ga}(n)q^{g_1g_2n/N}\;,$$
where
\begin{enumerate}\item
$$z_k(\chi_1,\chi_2,C)=2(N_2e/g_2)^{k-1}(e/g)\g(\ov{\chi_1})\g(\ov{\chi_2})\;,$$
\item For $n\ge1$
$$a_{\ga}(n)=\z_N^{A^{-1}(g_1g_2/C)n}\sum_{m\mid n,\ m\in\Z}\sign(m)m^{k-1}c(n,m)\;,\text{\quad with}$$
\begin{align*}
c(n,m)&=\sum_{\substack{s_1\bmod C/g\\(N_1g/g_1)s_1\equiv n/m\pmod{C/g_1}}}\ov{\chi_1}((n/m-(N_1g/g_1)s_1)/(C/g_1))\cdot\\
&\phantom{=}\cdot\sum_{\substack{s_2\bmod{C/g}\\(N_2g/g_2)s_2\equiv m\pmod{C/g_2}}}\ov{\chi_2}((m-(N_2g/g_2)s_2)/(C/g_2))\z_{C/g}^{-(Ae/g)^{-1}s_1s_2}\;.\end{align*}
\item Set
$$T_k(\chi_1,\chi_2)=\begin{cases}(-1)^{k-1}\dfrac{\g(\ov{\chi_2})}{N_2(g_2/g)^{k-1}}\ov{\chi_1}(-Ae/g)S_k(\ov{\chi_1}\chi_2)&\text{\quad if $C/g=N_1$\;,}\\
0&\text{\quad if $C/g\ne N_1$\;.}\end{cases}$$
We have
$$a_{\ga}(0)=\begin{cases}T_k(\chi_1,\chi_2)&\text{\quad if $k>1$\;,}\\
T_1(\chi_1,\chi_2)+T_1(\chi_2,\chi_1)&\text{\quad if $k=1$\;.}
\end{cases}$$
\end{enumerate}\end{theorem}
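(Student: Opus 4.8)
The plan is to expand the left-hand side directly from the lattice-sum definition of the Eisenstein series. Since $G_k(\chi_1,\chi_2)$ is the ($\ga$-independent) constant $\big((-2\pi i/N_1)^k\g(\ov{\chi_1})/(k-1)!\big)$ times $F_k(\chi_1,\chi_2)$, it suffices to expand $G_k(\chi_1,\chi_2)(e\tau)|_k\ga$ and then divide by that constant; combined with the scalars produced along the way this division will reproduce exactly $1/z_k(\chi_1,\chi_2,C)$, with $\g(\ov{\chi_1})$ cancelling and $\g(\ov{\chi_2})$ surviving. I would do the computation first for $k\ge3$, where every series converges absolutely, and recover $k=2$ and $k=1$ afterwards by repeating the same steps with Hecke's factor $|ce\tau+d|^{-2s}$ in place and letting $s\to0$; the excluded triple $(k,\chi_1,\chi_2)=(2,1,1)$ is exactly the case in which the limit retains a term in $1/\Im(\tau)$, so outside it the answer is the genuine $q$-expansion of a modular form. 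Note at the outset that $N\mid B$ and $AD-BC=1$ force $AD\equiv1\pmod N$, whence $\gcd(A,N)=1$ and $A^{-1}\bmod N$ is defined (equal to $D$), which is what makes $\z_N^{A^{-1}(g_1g_2/C)n}$ meaningful.

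First I would unfold the slash. With $\ga=\psmm{A}{B}{C}{D}$ and $c=N_1c_0$, the cocycle identity turns $(C\tau+D)^{-k}G_k(\chi_1,\chi_2)(e\ga\tau)$ into
$$\dfrac12\sum_{c_0\in\Z}\sum_{d\in\Z}\dfrac{\ov{\chi_1(d)}\chi_2(c_0)}{\big((N_1eAc_0+Cd)\tau+(N_1eBc_0+Dd)\big)^k}\;.$$
Writing $X=N_1eAc_0+Cd$ and $Y=N_1eBc_0+Dd$, the pair $(X,Y)$ runs over the sublattice of $\Z^2$ spanned by $(N_1eA,N_1eB)$ and $(C,D)$, of index $N_1e$. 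Since $\gcd(A,C)=1$ one has $\gcd(N_1eA,C)=\gcd(N_1e,C)=\gcd(N_1g,C)=g_1$, so $X$ ranges exactly over $g_1\Z$; the parallel bookkeeping attached to $\chi_2$, for which the relevant modulus is $N_2e$ rather than $N_1e$, produces $g_2$. This separates the double sum into the single degenerate line $X=0$ and the lines $X=g_1m_0$ with $m_0\ne0$.

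On the line $X=0$ one has $C\mid N_1eAc_0$, hence $(C/g_1)\mid c_0$ (using $\gcd(A,C)=1$), and then $d$ and $Y$ are determined by $c_0$; the summand carries $\ov{\chi_1(d)}$ with $d$ a fixed multiple of $N_1/\gcd(N_1,C/g)$ times the running index, so it vanishes identically unless $N_1\mid C/g$, and a symmetric argument on $\chi_2$ — using that $C/g\mid N_1N_2$, so $(C/g)/N_1\mid N_2$ when $N_1\mid C/g$ — forces it to vanish also when $N_1\mid C/g$ but $C/g\ne N_1$. Thus the degenerate line contributes only when $C/g=N_1$; in that case $g_1=C$, the index $c_0$ runs over all of $\Z$, $d=-(e/g)Ac_0$ and $Y=-(e/g)c_0$, the summand is $\ov{\chi_1(-(e/g)A)}\,(-(e/g)c_0)^{-k}(\ov{\chi_1}\chi_2)(c_0)$, and summing over $c_0\ne0$ (the two halves matching because $\chi_1\chi_2(-1)=(-1)^k$) gives $2L(\ov{\chi_1}\chi_2,k)$, which with the factor $1/2$ in front of $G_k$ is $L(\ov{\chi_1}\chi_2,k)$, hence $S_k(\ov{\chi_1}\chi_2)$ via the identity recorded just after its definition. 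Multiplying through by $z_k$ — which supplies $\g(\ov{\chi_2})$ together with the powers of $N_2$, $e/g$ and $g_2/g$ — and sorting the signs produces $T_k(\chi_1,\chi_2)$; for $k\ge2$ this is all of $a_\ga(0)$, while for $k=1$ the line must be summed in the Eisenstein sense, and its two ends contribute the symmetric pair $T_1(\chi_1,\chi_2)+T_1(\chi_2,\chi_1)$.

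For the lines $X=g_1m_0\ne0$ I would fix $m_0$, describe the fiber $\{(c_0,d):N_1eAc_0+Cd=g_1m_0\}$ as one arithmetic progression in a single parameter $t$, and note that along it $Y$ is affine in $t$ while $\ov{\chi_1(d)}$ and $\chi_2(c_0)$ are periodic with explicit periods — this is where $N\mid B$ and $N_1,N_2\mid N$ keep the periods clean. Expanding $\ov{\chi_1}$ and $\chi_2$ by their Gauss sums (feeding $\g(\ov{\chi_1})\g(\ov{\chi_2})$ into $z_k$) turns the inner sum over $Y$ into, for each pair of Gauss-sum residues, a sum $\sum_t\z_M^{\mu t}(w+\nu t)^{-k}$ for suitable $M,\mu,w,\nu$; the cross term of the two expansions is the root of unity $\z_{C/g}^{-(Ae/g)^{-1}s_1s_2}$, where $Ae/g$ is a unit mod $C/g$ because $\gcd(A,C/g)=\gcd(e/g,C/g)=1$. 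Applying the Lipschitz formula
$$\sum_{r\in\Z}\dfrac{1}{(w+r)^k}=\dfrac{(-2\pi i)^k}{(k-1)!}\sum_{m\ge1}m^{k-1}e^{2\pi imw}$$
to each such sum (after the change $t\mapsto-t$ when $m_0>0$, to keep the imaginary part positive) restricts the Fourier index $m$ to an arithmetic progression coming from the twist, and its leading phase, once the fiber is summed, accounts for the global factor $\z_N^{A^{-1}(g_1g_2/C)n}$. Reindexing by $n$, with $m$ ranging over the divisors of $n$ in $\Z$ carrying the sign of $m_0$, gathers the lines into $\sum_{m\mid n,\,m\in\Z}\sign(m)m^{k-1}c(n,m)$, the congruences defining the $s_1$- and $s_2$-ranges being exactly the solvability conditions relating $m_0$, $m$ and the two character periods. \textbf{The main obstacle} is this last layer of bookkeeping: showing that the interplay of the conductor $N_i$ of each $\chi_i$ with $e$, $C$ and the lattice yields exactly the gcd's $g$, $g_1$, $g_2$; that $s_1$ and $s_2$ genuinely range over residues mod $C/g$ subject to the stated congruences mod $C/g_1$ and $C/g_2$; and that the two roots of unity $\z_N^{A^{-1}(g_1g_2/C)n}$ and $\z_{C/g}^{-(Ae/g)^{-1}s_1s_2}$ come out with precisely the stated exponents.
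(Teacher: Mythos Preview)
The paper does not actually prove this theorem: it records the formula after setting up the reductions (the matrix factorisation and the lemma normalising $\ga$ to $C\mid N$, $C>0$, $N\mid B$), and then immediately moves on to rationality questions. So there is no ``paper's own proof'' to compare against.

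That said, your outline is the natural route implicit in the paper's setup, and it is sound. Working with the lattice sum for $G_k$ rather than $F_k$, transforming by $\ga$ via the cocycle, splitting off the line $X=0$ for the constant term, and on each line $X\ne0$ opening the characters through their Gauss sums before applying Lipschitz summation is exactly how such expansions are obtained; the Hecke-regularised repetition for $k\le2$ is standard. Your gcd identity $\gcd(N_1e,C)=\gcd(N_1g,C)=g_1$ is correct (check prime by prime), and your remark that $AD\equiv1\pmod N$ makes $A^{-1}$ well-defined is the right sanity check for the global root of unity. The constant-term discussion is also on target: the $X=0$ line forces $d$ into a progression that kills $\ov{\chi_1}$ unless $N_1\mid C/g$, and since $C/g\mid N_1N_2$ the residual quotient $(C/g)/N_1$ divides $N_2$, so $\chi_2$ kills the rest unless $C/g=N_1$; you have this. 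The only honest caveat is the one you flag yourself: the final reindexing that turns the Lipschitz output into the double sum over $s_1,s_2\bmod C/g$ with the stated congruences and the exponent $-(Ae/g)^{-1}s_1s_2$ is genuinely fiddly, and a complete write-up would need to track it line by line rather than assert it. But there is no missing idea.
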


Note that $(k,\chi_1,\chi_2)=(2,1,1)$ corresponds to the quasimodular
form $F_2$ (or $E_2$) which can be easily treated directly thanks to the
first matrix identity given above applied to $\ga=\psmm{eA}{eB}{C}{D}$.

\subsection{Rationality Questions}

To use this theorem in algorithmic practice, we need to make a choice.
As can be seen on the expression of $a_{\ga}(n)$, the coefficients of the
expansion belong to the large cyclotomic field $\Q(\z_N,\z_{\phi(N)})$,
which is in fact also the field which contains Gauss sums of characters
modulo $N$. When $N$ is not tiny, say when $N$ is a prime around $1000$,
this is a very large number field, so it seems almost impossible to work
with exact elements of the field. In our implementation we thus have chosen
to work with approximate complex values having hopefully sufficient accuracy
(note that this is sufficient in the application to Petersson products).
At the end of the computation of $f|_k\ga$ we may however want to recover
the exact algebraic values. This can of course be done using LLL-type
algorithms, with an a priori guess of the field of coefficients. But this
can be done rigorously by using the following results.

\smallskip

First, assume that $\gcd(N/\gcd(N,C),C)=1$, or equivalently
$(N,C^2)=(N,C)$ (so that the cusp $A/C$ will be regular). We then have the
following two results:

\begin{lemma} Assume that $\gcd(N/\gcd(N,C),C)=1$, and set $g=\gcd(N,C)$ and
$Q=N/g$. There exist an Atkin--Lehner matrix of the form
$W_Q=\psmm{Qx}{y}{N}{Q}$, a matrix $\de=\psmm{a}{b}{c}{d}\in\G_0(N)$, and an
integer $v$, such that
$$\ga=W_Q\de\psmm{1/Q}{v/Q}{0}{1}\;,$$
and we have $d\equiv Q^{-1}D\pmod{N/Q}$, where $Q^{-1}$ is an inverse of $Q$
modulo $C$.\end{lemma}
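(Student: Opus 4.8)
\medskip
\noindent\textbf{Proof strategy.}
The plan is to obtain the factorization from a single explicit matrix computation, using the only free parameter, the integer $v$, to clear the one denominator that appears. Write $g=\gcd(N,C)$ and $Q=N/g$, so that $Qg=N$ and $g\mid C$; combining $g\mid C$ with the hypothesis $\gcd(Q,C)=1$ forces $\gcd(Q,g)=1$, i.e.\ $Q\,\|\,N$. First I would choose, by B\'ezout, integers $x,y$ with $Qx-gy=1$ and set $W_Q=\psmm{Qx}{y}{N}{Q}$; then $\det W_Q=Q^2x-Ny=Q$, and since its top-left entry is a multiple of $Q$, its bottom-left entry is $N$, and $Q\,\|\,N$, this is a genuine Atkin--Lehner matrix of the shape demanded. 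Two by-products of the relation $Qx-gy=1$ get used repeatedly: $\gcd(y,Q)=1$, and $x\equiv Q^{-1}\pmod{g}$ (and since $g=N/Q$ divides $C$, this $Q^{-1}$ may be taken to be an inverse of $Q$ modulo $C$, matching the statement).

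Next I would set $\de:=W_Q^{-1}\,\ga\,\psmm{Q}{-v}{0}{1}$, with $v\in\Z$ yet to be fixed; inverting the two outer factors rewrites this as the asserted identity $\ga=W_Q\,\de\,\psmm{1/Q}{v/Q}{0}{1}$, and $\det\de=\tfrac1Q\cdot1\cdot Q=1$ comes for free. The real content is to choose $v$ so that $\de$ is integral --- it will then lie in $\G_0(N)$ as soon as its bottom-left entry is seen to be divisible by $N$. For this I would simply expand $\de$ entrywise, using $W_Q^{-1}=\tfrac1Q\psmm{Q}{-y}{-N}{Qx}$ together with $N/Q=g$ and $g\mid C$. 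Three of the four entries turn out to be integers for \emph{every} $v$: the $(1,1)$ entry is $QA-yC$, the $(2,1)$ entry is $Q(xC-gA)=N(xC/g-A)$, plainly a multiple of $N$ since $g\mid C$, and the $(2,2)$ entry is $xD-gB-v(xC-gA)$. The $(1,2)$ entry works out to $-Av+B+\tfrac{y(Cv-D)}{Q}$, hence is an integer if and only if $Q\mid y(Cv-D)$; because $\gcd(y,Q)=\gcd(C,Q)=1$ this amounts to the single linear congruence $v\equiv C^{-1}D\pmod{Q}$, which is solvable. Fixing such a $v$ gives $\de\in\G_0(N)$.

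Finally, the congruence $d\equiv Q^{-1}D\pmod{N/Q}$ is read straight off the $(2,2)$ entry $d=xD-gB-v(xC-gA)$: reducing modulo $g=N/Q$, the terms $gB$, $gAv$ and $xCv$ all vanish (the last because $g\mid C$), leaving $d\equiv xD\equiv Q^{-1}D\pmod{N/Q}$. I expect the only delicate point to be the bookkeeping that isolates $v$: one must normalise $W_Q$ to have determinant exactly $Q$, so that the hypothesis $\gcd(Q,N/Q)=1$ is exactly what makes the construction work, and then keep careful track of $Qg=N$ and $g\mid C$ to confirm that only the $(1,2)$ entry of $\de$ imposes a condition on $v$; once that is done, $v$ is pinned down by one congruence modulo $Q$ and nothing more is needed.
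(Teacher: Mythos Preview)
The paper states this lemma without proof, so there is nothing to compare against; your argument is correct and is exactly the direct verification one would write down. The computations of the four entries of $\de=W_Q^{-1}\ga\psmm{Q}{-v}{0}{1}$ are accurate, the reduction of the integrality of $\de$ to the single congruence $Cv\equiv D\pmod{Q}$ is clean, and the final congruence $d\equiv xD\equiv Q^{-1}D\pmod{g}$ follows as you say.
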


As usual, since the action of $\de$ on $M_k(\G_0(N),\chi)$ is multiplication
by $\chi(d)$ and the action of an upper triangular matrix on Fourier expansions
is trivial to write, we are reduced to computing the field of coefficients
of $f|_kW_Q$. This is given by a theorem essentially due to
Shimura and Ohta, and extended to cover half-integral weight as well.
We first define a normalizing constant $C(k,\chi,W_Q)$ as follows. First
recall that if $\gcd(Q,N/Q)=1$, which is the case here, we can write in
a unique way $\chi=\chi_Q\chi_{N/Q}$ with $\chi_Q$ defined modulo $Q$ and
$\chi_{N/Q}$ modulo $N/Q$.

\begin{definition} Let $W_Q=\psmm{Qx}{y}{Nz}{Qt}$ be a general Atkin--Lehner
matrix.
\begin{enumerate}\item We set $s(k,W_Q)=1$ unless $k$ is a half integer,
in which case we set $s(k,W_Q)=i^{(x-1)/2}$ if $Q$ is odd, and
$s(k,W_Q)=1+(-1)^{k+y/2}i$ if $4\mid Q$ (note that we cannot have
$Q\equiv2\pmod4$).
\item We define $C(k,\chi,W_Q)=s(k,W_Q)/(\g(\chi_Q)Q^{k/2})$.
\end{enumerate}
\end{definition}

The theorem is as follows:

\begin{theorem} Let $F\in M_k(\G_0(N),\chi)$ with $k$ integral or half
integral, set $K=\Q(F)$, let $Q\Vert N$ be a primitive divisor of $N$, and
let $W_Q=\psmm{Qx}{y}{Nz}{Qt}$ be a general Atkin--Lehner matrix.
We have $\Q(C(k,\chi,W_Q)F|_kW_Q)\subset K$.\end{theorem}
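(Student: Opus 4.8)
The plan is to reduce the statement, via the Borisov--Gunnells theorem quoted in Section~2.1, to the case of a product of two Eisenstein series, and then to invoke the explicit expansion of $F_k|_k\ga$ given in the main Theorem of Section~2.2. First I would recall that by that theorem, any $F\in M_k(\G_0(N),\chi)$ (after multiplying by an auxiliary Eisenstein or theta factor if $k=1$, $2$, or half-integral) is a $K$-linear combination of forms $F_\ell(\chi_1,\chi_2)(e\tau)F_{k-\ell}(\chi_1',\chi_2')(e'\tau)$; since the normalizing constant $C(k,\chi,W_Q)$ and the field $K=\Q(F)$ behave multiplicatively under the Atkin--Lehner factor attached to the auxiliary form, it suffices to treat each such product. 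For a single $F_k(\chi_1,\chi_2,e)|_kW_Q$ one uses the reductions already set up before the main Theorem (write $W_Q\delta\psmm{1/Q}{v/Q}{0}{1}$ as in the preceding Lemma, absorb $\delta$ into a character value and the upper-triangular matrix into a $q$-substitution), so that everything comes down to examining the arithmetic of the coefficients $a_\ga(n)$, the prefactor $1/z_k(\chi_1,\chi_2,C)$, and the constant term $T_k$.

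The heart of the argument is then a Galois-equivariance computation. Let $\sigma\in\Gal(\Q(\z_N,\z_{\phi(N)})/K)$; I would show that $\sigma$ fixes $C(k,\chi,W_Q)F|_kW_Q$ by tracking how $\sigma$ acts on each ingredient of the explicit formula. The roots of unity $\z_N^{A^{-1}(g_1g_2/C)n}$ and $\z_{C/g}^{-(Ae/g)^{-1}s_1s_2}$ transform by $\z_M\mapsto\z_M^{t}$ for the relevant $t$ coprime to $M$; the character values $\ov{\chi_i}(\cdot)$ transform by $\sigma(\chi_i)=\chi_i^{t'}$; and the Gauss sums $\g(\ov{\chi_1})$, $\g(\ov{\chi_2})$ in $z_k$, together with $\g(\chi_Q)$ in $C(k,\chi,W_Q)$, transform by the classical rule $\sigma(\g(\psi))=\ov{\psi}(t)\g(\psi^{t})$ for $\z_M\mapsto\z_M^{t}$. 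The claim is that, after the substitution $q^x\mapsto$ (appropriate power) forced by the exponent change $g_1g_2n/N\mapsto\cdots$ and the reindexing of the sums over $s_1,s_2$ and over divisors $m\mid n$, all these twists cancel exactly against the hypothesis that $\sigma$ fixes the coefficients $\Q(F)$ of $F$ itself — this is precisely the content of the Shimura--Ohta normalization, and the half-integral-weight factor $s(k,W_Q)$ is designed to absorb the extra eighth/fourth roots of unity coming from the theta multiplier. The constant term requires separate but parallel bookkeeping: one checks that $T_k(\chi_1,\chi_2)$, via $S_k(\ov{\chi_1}\chi_2)$ and its Gauss sum and generalized Bernoulli number, is $\sigma$-compatible with the rest, using that $S_k(\psi)$ is (up to an explicit algebraic factor) a special $L$-value whose Galois behaviour is again governed by the Gauss-sum rule.

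The main obstacle I expect is bookkeeping the exact exponent of $\z_N$ (and of the smaller cyclotomic moduli $C/g$, $\phi(N)$) that appears after $\sigma$ is applied: one must verify that the substitution $\sigma$ induces on the $q$-expansion variable is consistent with the substitution it induces on the character and Gauss-sum factors, with no leftover root of unity. This is a finite but delicate computation in which the coprimality conditions — $\gcd(N/\gcd(N,C),C)=1$ guaranteeing the cusp $A/C$ is regular, and $\gcd(Q,N/Q)=1$ guaranteeing the factorization $\chi=\chi_Q\chi_{N/Q}$ — are used crucially to make the various inverses ($A^{-1}$, $Q^{-1}$, $(Ae/g)^{-1}$) well-defined modulo the right moduli and to split the Gauss sum $\g(\chi)$ as $\g(\chi_Q)\g(\chi_{N/Q})$ up to a root of unity. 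A secondary subtlety is the half-integral case, where one must also confirm that $s(k,W_Q)$, defined by cases according to the parity of $Q$, transforms under $\sigma$ exactly so as to cancel the Galois twist of the theta multiplier system picked up in the auxiliary multiplication; here I would lean on the known transformation of $\th|_{1/2}\ga$ already available in the package and referenced in Section~2.1. Once these compatibilities are in place, the conclusion $\Q(C(k,\chi,W_Q)F|_kW_Q)\subset K$ follows because every Galois element fixing $K$ fixes the normalized form.
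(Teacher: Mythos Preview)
The paper does not actually prove this theorem: it is quoted as a result ``essentially due to Shimura and Ohta, and extended to cover half-integral weight as well,'' and the text moves on immediately afterwards. (The appendix referred to contains only the Brunault--Neururer proof of the \emph{next} theorem, on the field $K(\z_R)$.) So there is no argument in the paper to compare your plan against; the classical Shimura--Ohta proof goes through the $q$-expansion principle and the Galois action on the space $M_k(\G_0(N),\chi)$, not through explicit Eisenstein expansions.

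That said, your plan has a structural gap worth naming. You propose to write $F=\sum_i c_i\,E_iE_i'$ via Borisov--Gunnells, apply $W_Q$ termwise, and then check that each $\sigma\in\Gal(\ov\Q/K)$ fixes $C(k,\chi,W_Q)\sum_i c_i\,(E_iE_i')|_kW_Q$. The difficulty is that neither the scalars $c_i$ nor the individual products $E_iE_i'$ have Fourier coefficients in $K$; only their sum does. So when you write ``all these twists cancel exactly against the hypothesis that $\sigma$ fixes the coefficients of $F$,'' you are invoking precisely the statement to be proved: you need a mechanism that converts the information $F^\sigma=F$ at the cusp $i\infty$ into information at the cusp $W_Q(i\infty)$, and term-by-term bookkeeping on one fixed Borisov--Gunnells decomposition does not supply that mechanism. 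What would work is to prove the \emph{equivariance} statement
\[
\bigl(C(k,\chi,W_Q)\,G|_kW_Q\bigr)^{\sigma}\;=\;C(k,\chi^\sigma,W_Q)\,G^{\sigma}|_kW_Q
\]
for \emph{every} $G\in M_k(\G_0(N),\chi)$ and every $\sigma\in\mathrm{Aut}(\C)$, and then specialize to $G=F$, $\sigma|_K=\mathrm{id}$. Your explicit computations on $F_k(\chi_1,\chi_2,e)|_k\ga$ could in principle establish this equivariance for each Eisenstein factor, hence for products and linear combinations; but the argument must be organized around a general $\sigma$ and must also check that applying $\sigma$ to the decomposition of $F$ yields a valid decomposition of $F^\sigma$ (i.e.\ that the $c_i$ and the characters $\chi_j$ transform compatibly). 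As written, the plan conflates ``$\sigma$ fixes each ingredient'' with ``$\sigma$ fixes the sum,'' and only the latter is available from the hypothesis.
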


\smallskip

In the general case we cannot use Atkin--Lehner involutions, but again
using the Borisov--Gunnells theorem F.~Brunault and M.~Neururer recently
proved the following theorem, and we thank them for permission to include
it here. Their proof is given in the appendix to this paper.

\begin{theorem} Let $\ga=\psmm{A}{B}{C}{D}\in\G$, denote by $M\mid N$ the
conductor of $\chi$, and as in the previous theorem set $K=\Q(F)$. If
$f\in M_k(\G_0(N),\chi)$ with $k$ integral the Fourier coefficients of
$f|_k\ga$ belong to the cyclotomic extension $K(\zeta_R)$, where
$R=\lcm(N/\gcd(N,CD),M/\gcd(M,BC))$.\end{theorem}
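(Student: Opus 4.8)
The plan is to combine the explicit formula for $F_k(\chi_1,\chi_2,e)|_k\ga$ given above with a Galois descent. Since $K=\Q(f)$, it suffices to show that every automorphism of $\ov\Q$ fixing $K$ and $\zeta_R$ also fixes every Fourier coefficient of $f|_k\ga$. First one checks that these coefficients lie in \emph{some} finite abelian extension $K(\zeta_L)$ of $K$: by the Borisov--Gunnells theorem (after multiplying $f$ by a fixed Eisenstein series defined over $\Q$, of weight $1$ or $2$, in the unfavourable low--weight case, and dividing back afterwards) we may write $f=\sum_j\lambda_jB_j$ with the $B_j$ products of two Eisenstein series $F_\ell(\chi_1,\chi_2,e)$; the $\lambda_j$, determined by linear algebra from the $q$--expansions at infinity, lie in $K(\zeta_{\phi(N)})$, while by the formula above each $B_j|_k\ga$ has coefficients in $\Q(\zeta_N,\zeta_{\phi(N)})$ (they involve only roots of unity of order dividing $N$, values of the $\chi_i$, and Gauss sums together with their inverses). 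So it is enough to treat $\sigma=\sigma_t$ with $t\equiv1\pmod R$ and $\sigma_t|_K=\mathrm{id}$; note that $\Q(\chi)\subseteq K$---because $\langle d\rangle f=\chi(d)f$ and the diamond operators are $\Q$--rational---so also $t\equiv1\pmod{\mathrm{ord}(\chi)}$, and $f^{\sigma_t}=f$.

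The crux is the transformation rule
\[
\sigma_t\bigl(f|_k\ga\bigr)=\chi(t)^{-1}\,f|_k(\ga\,u_t),\qquad u_t\in\G,\quad u_t\equiv\psmm{t^{-1}}{0}{0}{t}\pmod N.
\]
For $f=F_\ell(\chi_1,\chi_2,e)$ it is obtained by carrying $\sigma_t$ through every ingredient of the formula above: the roots of unity $\zeta_N^{A^{-1}(g_1g_2/C)n}$ and $\zeta_{C/g}^{-(Ae/g)^{-1}s_1s_2}$, whose exponents are multiplied by $t$ (equivalently $A$ is replaced by $t^{-1}A$ modulo $N$); the values $\ov{\chi_i}(\cdot)$, which become $\ov{\chi_i^{(t)}}(\cdot)$ with $\chi_i^{(t)}(x):=\chi_i(x)^t$; the Gauss sums in $z_k$ and in $S_k$, via $\g(\psi)\mapsto\psi(t)^{-t}\g(\psi^{(t)})$; and the special $L$--value in $S_k$, by a twist of the character. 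After the $\chi_i(t)^{\mp t}$ contributions cancel one obtains $\sigma_t(F_\ell|_\ell\ga)=(\chi_1\chi_2)(t)^{-t}\,F_\ell(\chi_1^{(t)},\chi_2^{(t)},e)|_\ell\ga_t$ with $\ga_t\equiv\ga\,\mathrm{diag}(t^{-1},t)\pmod N$; extending $\C$--linearly over the Borisov--Gunnells decomposition (all $B_j$ share the nebentypus $\chi$ and a level dividing $N$) and using $f^{\sigma_t}=f$ and $\chi(t)^{-t}=\chi(t)^{-1}$ (valid because $\mathrm{ord}(\chi)\mid t-1$) yields the displayed rule. Alternatively one may invoke Shimura's formula for the action of $\mathrm{Aut}(\C)$ on Fourier expansions at cusps, the role of Borisov--Gunnells then being only to supply the algebraicity used above.

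Granting the rule, put $\eta_t:=\ga\,u_t\,\ga^{-1}\in\G$, so that modulo $N$
\begin{align*}
\eta_t&\equiv\psmm{A}{B}{C}{D}\psmm{t^{-1}}{0}{0}{t}\psmm{D}{-B}{-C}{A}\\
&\equiv\psmm{ADt^{-1}-BCt}{AB(t-t^{-1})}{CD(t^{-1}-t)}{ADt-BCt^{-1}}.
\end{align*}
Its lower--left entry is $\equiv0\pmod N$ precisely when $CD(t^2-1)\equiv0\pmod N$, i.e.\ when $t^2\equiv1\pmod{N/\gcd(N,CD)}$, which holds because $t\equiv1\pmod R$ and $N/\gcd(N,CD)\mid R$. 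Hence $\eta_t\in\G_0(N)$, and $f|_k(\ga\,u_t)=(f|_k\eta_t)|_k\ga=\chi(d_{\eta_t})\,f|_k\ga$, where $d_{\eta_t}\equiv ADt-BCt^{-1}\pmod N$ is the lower--right entry of $\eta_t$; thus $\sigma_t(f|_k\ga)=\chi\bigl(t^{-1}d_{\eta_t}\bigr)\,f|_k\ga$. Finally, modulo $M$ one has $t^{-1}d_{\eta_t}\equiv AD-BCt^{-2}=1-BCt^{-2}(t^2-1)\equiv1$, using $AD-BC=1$ and $t^2\equiv1\pmod{M/\gcd(M,BC)}$ (again from $t\equiv1\pmod R$). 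Therefore $\sigma_t(f|_k\ga)=f|_k\ga$, as required; note that it is the identity $\det\ga=1$ that fuses the two congruence conditions into $t\equiv1\pmod R$.

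The step I expect to be hard is the second one: making the transformation rule precise---in particular the normalising factor $\chi(t)^{-1}$ and the exact residue of $u_t$ modulo $N$---which requires carefully tracking $\sigma_t$ through all the Gauss sums, character values, $L$--values and roots of unity in the formula above, and also through the reduction of an arbitrary $\ga\in\G$ to the normalised shape $C\mid N$, $N\mid B$ via the lemma above, keeping account of the translation $T^m$ (which multiplies the $n$-th Fourier coefficient by a root of unity). Granting that, the algebraicity input from Borisov--Gunnells and the concluding congruence bookkeeping that produces $R=\lcm(N/\gcd(N,CD),M/\gcd(M,BC))$ are comparatively routine.
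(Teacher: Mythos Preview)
The paper defers this proof to an appendix by Brunault and Neururer which is not included in the provided source; the only information given in the body of the paper is that the argument proceeds ``using the Borisov--Gunnells theorem.'' Your proposal does precisely this, so as far as can be judged from the available text the approaches coincide.

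Your argument is sound. The reduction to a Galois--descent statement, the observation that $\Q(\chi)\subset K$ forces $\mathrm{ord}(\chi)\mid t-1$, and the key transformation law $\sigma_t(f|_k\ga)=\chi(t)^{-1}f|_k(\ga u_t)$ with $u_t\in\G$, $u_t\equiv\psmm{t^{-1}}{0}{0}{t}\pmod N$, are exactly the ingredients of the Brunault--Neururer proof; your derivation of this law from the explicit expansion of $F_k(\chi_1,\chi_2,e)|_k\ga$ (tracking $\sigma_t$ through the $\z_N$-- and $\z_{C/g}$--factors, the character values, and the Gauss sums in $z_k$ and $S_k$) is the correct route, and your remark that Shimura's reciprocity law gives the same identity more conceptually is also to the point. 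The concluding matrix computation is correct: with $\eta_t=\ga u_t\ga^{-1}$ one finds lower--left entry $CD(t^{-1}-t)$ and lower--right entry $ADt-BCt^{-1}$ modulo $N$, whence $\eta_t\in\G_0(N)$ once $N/\gcd(N,CD)\mid t^2-1$, and $\chi(t^{-1}d_{\eta_t})=\chi(AD-BCt^{-2})=\chi(1+BC(1-t^{-2}))=1$ once $M/\gcd(M,BC)\mid t^2-1$; both follow from $t\equiv1\pmod R$. One small point worth making explicit in a final write--up: the existence of $u_t\in\G$ with the prescribed reduction modulo $N$ is guaranteed by the surjectivity of $\SL_2(\Z)\to\SL_2(\Z/N\Z)$, and the passage through the normalisation $C\mid N$, $N\mid B$ via the lemma in the paper only introduces a left factor in $\G_0(N)$ and a right translation $T^m$, both of which are harmless for the Galois computation since $\sigma_t$ commutes with multiplication by $\chi(d)\in K$ and acts on the $T^m$--twist by $\z_{w}^{nm}\mapsto\z_w^{tnm}$, which is absorbed into the replacement $\ga\mapsto\ga u_t$.
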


\subsection{Expansion of $\th|_{1/2}\ga$}

For completeness, we also give the expansion of $\th|_{1/2}\ga$ which is
needed in the half-integral weight case. Thanks to the first two reductions
above (the third is not necessary) we may assume that
$\ga=\psmm{A}{B}{C}{D}\in\G$.
We recall that the \emph{theta multiplier} $v_{\th}(\ga)$ is given by
$$v_{\th}(\ga)=\leg{-4}{D}^{-1/2}\leg{C}{D}\;,$$
where we always choose the principal branch of the square root.
The result is then as follows:

\begin{proposition}\begin{enumerate}
  \item If $4\mid C$ we have
    $$\th|_{1/2}\ga=v_{\th}(\ga)\th=v_{\th}(\ga)\left(1+2\sum_{n\ge1}q^{n^2}\right)\;.$$
  \item If $C\equiv2\pmod4$, set $\al=\psmm{A-2B}{B}{C-2D}{D}$. Then
    $$\th|_{1/2}\ga=2v_{\th}(\al)\sum_{n\ge0}q^{(2n+1)^2/4}\;.$$
  \item If $2\nmid C$, let $\la\equiv-D/C\pmod4$, and set $D'=D+\la C$,
    $B'=\la A$, and $\al=\psmm{-B'}{A}{-D'}{C}$. Then
    $$\th|_{1/2}\ga=\dfrac{1-i}{2}v_{\th}(\al)\left(1+2\sum_{n\ge1}i^{-\la n^2}q^{n^2/4}\right)\;.$$
  \end{enumerate}
\end{proposition}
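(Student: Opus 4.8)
The plan is to reduce everything to the classical transformation formula for the Jacobi theta function, namely $\th(-1/(4\tau)) = \sqrt{-2i\tau}\,\th(\tau)$, together with the definition of the slash operator in weight $1/2$ with the theta multiplier $v_\th$. The three cases are governed by the $2$-adic valuation of the lower-left entry $C$, so I would organize the proof around finding, in each case, an explicit auxiliary matrix that is ``adapted'' to $\th$ in the sense that its action can be computed directly, and then transferring back via the cocycle relation $v_\th(\ga_1\ga_2) = w(\ga_1,\ga_2)\,v_\th(\ga_1)v_\th(\ga_2)$ for the automorphy factor.

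First, Case (1): if $4\mid C$ then $\ga\in\G_0(4)$, and since $\th\in M_{1/2}(\G_0(4))$ with multiplier $v_\th$, by definition $\th|_{1/2}\ga = v_\th(\ga)\th$; the stated $q$-expansion is just $\th(\tau)=\sum_{n\in\Z}q^{n^2/4}$ rewritten with $q=e^{2\pi i\tau}$ — note the exponent is $n^2$ here because of the normalization, so I should double-check the paper's convention $q^x=e^{2\pi i\tau x}$ against $\th=\sum q^{n^2/4}$, and indeed $\sum_{n\in\Z}e^{2\pi i n^2\tau}=1+2\sum_{n\ge1}q^{n^2}$ is consistent. Second, Case (3), $2\nmid C$: here I would verify the matrix identity $\ga = \al\,\psmm{*}{*}{*}{*}$ where $\al=\psmm{-B'}{A}{-D'}{C}$ with $D'=D+\la C$, $B'=\la A$, and $\la\equiv -D/C\pmod 4$. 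A direct computation shows $\al^{-1}\ga$ is upper-triangular (the choice of $\la$ is exactly what kills the lower-left entry modulo the determinant), and $\al$ itself has lower-left entry $-D'\equiv -D-\la C$; the congruence $\la\equiv -D/C$ forces $4\mid D'$ essentially, putting $\al$ into a form where $\th|_{1/2}\al$ is computed by the transformation formula for $\tau\mapsto$ (something like) $-1/(4\tau)$ composed with a translation, producing the factor $\tfrac{1-i}{2}$ and the Gauss-sum-type exponential $i^{-\la n^2}$. The upper-triangular remainder acts on $\sum q^{n^2/4}$ by $\tau\mapsto\tau+$const, which only twists each term by a root of unity, and one checks this twist is absorbed into the $i^{-\la n^2}$ factor. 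Third, Case (2), $C\equiv 2\pmod 4$: I would left-multiply $\ga$ by $\psmm{1}{0}{-2}{1}$ (or rather absorb it), i.e. use $\al=\psmm{A-2B}{B}{C-2D}{D}$, noting $C-2D$ is odd, so $\al$ falls under Case (3)'s regime; tracking how $\th$ transforms under the intervening matrix $\psmm{1}{2}{0}{1}$-type translation halves the lattice and produces the shift $n\mapsto 2n+1$ in the exponent, giving $\sum_{n\ge0}q^{(2n+1)^2/4}$ with the constant $2v_\th(\al)$.

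The main obstacle I anticipate is the careful bookkeeping of the automorphy factor and the principal branch of the square root throughout these reductions. The theta multiplier $v_\th$ is not a genuine homomorphism — it satisfies a cocycle relation with an explicit (but fiddly) $8$th-root-of-unity correction term $w(\ga_1,\ga_2)$ — so each time I write $\ga$ as a product I must track that correction, and I must also verify that the branch of $\sqrt{c\tau+d}$ chosen in the slash operator matches the branch implicit in the closed-form expressions (this is precisely where signs like $1-i$ versus $1+i$, and the exponents $i^{(x-1)/2}$ appearing in the earlier Definition of $s(k,W_Q)$, get pinned down). Concretely, the delicate point in Case (3) is confirming that $v_\th(\ga) = $ (the product of $v_\th(\al)$ with the contribution of the upper-triangular factor) $\times$ (a specific $8$th root of unity), and that this $8$th root of unity is exactly what makes the displayed formula branch-correct; I would handle this by checking it on generators ($T$ and $S$, or rather $T$ and $\psmm{1}{0}{4}{1}$-type matrices) and invoking continuity/the cocycle identity, rather than by a global computation. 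Everything else — the matrix identities, the $q$-expansion rearrangements — is routine linear algebra and geometric-series manipulation.
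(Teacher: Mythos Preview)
The paper states this proposition without proof, so there is nothing to compare your argument against directly; I can only assess whether your outline would succeed.

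Your overall strategy is the right one: in each case factor $\ga=\al\be$ with $\al\in\G_0(4)$, so that $\th|_{1/2}\al=v_\th(\al)\th$ by Case~(1), and with $\be$ a fixed simple matrix whose action on $\th$ can be computed explicitly from the functional equation. Case~(1) is immediate, and your sketch of Case~(3) is essentially correct: one has $\ga=\al\,S^{-1}T^{-\la}$, the congruence on $\la$ forces $4\mid D'$ and hence $\al\in\G_0(4)$, and the direct computation $\th|_{1/2}S=\tfrac{1-i}{2}\sum_{n\in\Z}q^{n^2/4}$ followed by the translation $T^{-\la}$ yields the stated series with the twist $i^{-\la n^2}$.

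Your Case~(2), however, contains a concrete error. You assert that $C-2D$ is odd, so that $\al$ falls under Case~(3). This is false: since $C\equiv 2\pmod4$ and $AD-BC=1$ forces $D$ to be odd, we have $C-2D\equiv 2-2\equiv 0\pmod4$, so in fact $\al\in\G_0(4)$ already. The factorization is $\ga=\al\psmm{1}{0}{2}{1}$, and the residual matrix $\psmm{1}{0}{2}{1}$ is \emph{not} a translation; your description of the remaining step as a ``$\psmm{1}{2}{0}{1}$-type translation'' that ``halves the lattice'' is not what is happening. What is actually needed is the explicit evaluation of $\th|_{1/2}\psmm{1}{0}{2}{1}$, which (e.g.\ by writing $\psmm{1}{0}{2}{1}=S^{-1}T^{-2}S$ and reusing the Case~(3) computation, or directly via Poisson summation) produces $2\sum_{n\ge0}q^{(2n+1)^2/4}$ up to the cocycle constant. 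With this correction the argument goes through, but as written your Case~(2) would not.
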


\subsection{Fourier Expansion of $f|_k\ga$}\label{fourga}

We need to recall some notation relative to the Fourier expansion of
$f|_k\ga$ for $\ga\in\G$ and $f\in M_k(\G_0(N),\chi)$. It is easy to show
that it has the form
$$f|_k\ga(\tau)=q^{\al(\ga)}\sum_{n\ge0}a_{\ga}(n)q^{n/w(\ga)}\;,$$
where $w(\ga)$ is the \emph{width} of the cusp $\ga(i\infty)$ and $\al(\ga)$
is a rational number in $[0,1[$, which by definition is different from $0$ if
and only the cusp is \emph{irregular}. For $\ga=\psmm{A}{B}{C}{D}$, these
quantities are given by the formulas
$$w(\ga)=\dfrac{N}{\gcd(N,C^2)}\text{\quad and\quad}e^{2\pi i\al(\ga)}=\chi\left(1+\dfrac{ANC}{\gcd(N,C^2)}\right)=\chi(1+ACw(\ga))\;.$$
In addition, note that the denominator of $\al(\ga)$ divides
$\gcd(N,C^2)/\gcd(N,C)$, and that $w(\ga)$ and $\al(\ga)$ only depend on the
representative $c$ of the cusp $\ga(i\infty)=A/C$, so we will denote them
$w(c)$ and $\al(c)$.

\subsection{Computation of all $f|_k\ga_j$}\label{gajall}

In the application to Petersson products we will need to compute \emph{all}
the Fourier expansions of $f|_k\ga_j$ for a system of right coset
representatives of $\G_0(N)\backslash\G$, i.e., such that
$\G=\bigsqcup_{j=1}^r\G_0(N)\ga_j$. Although the formulas that we will give
are independent of this choice, for efficiency reasons it is essential to
do it properly.

Let $C$ be a set of representatives of cusps of $\G_0(N)$ (which is
\emph{much} smaller than the set of cosets: for instance if $N$ is prime we
have $N+1$ cosets but only $2$ cusps), and for each $c\in C$ let $\ga_c\in\G$
such that $\ga_c(i\infty)=c$, and as above let $w(c)$ be the width of the
cusp $c$. We claim that the $(\ga_cT^m)_{c\in C,\ 0\le m<w(c)}$ form a system
of right coset representatives of $\G_0(N)\backslash\G$. Indeed, let
$\ga\in\G$, and let $c$ be the representative of the cusp $\ga(i\infty)$.
By definition this means that there exists $\de\in\G_0(N)$ such that
$\ga(i\infty)=\de(c)=\de\ga_c(i\infty)$, so $\ga=\de\ga_cT^m$ for some integer
$m$, and by definition of the width $\ga_cT^{w(c)}\ga_c^{-1}\in\G_0(N)$,
so we can always reduce $m$ modulo $w(c)$, proving our claim since
$\sum_{c\in C}w(c)=[\G:\G_0(N)]$.

Thus, we simply compute
$$f|_k\ga_c(\tau)=q^{\al(c)}\sum_{n\ge0}a_{\ga_c}(n)q^{n/w(c)}\;,$$
and we deduce that
$$f|_k(\ga_cT^m)(\tau)=e^{2\pi im\al(c)}q^{\al(c)}\sum_{n\ge0}a_{\ga_c}(n)\z_{w(c)}^{nm}q^{n/w(c)}$$
with $\z_{w(c)}=e^{2\pi i/w(c)}$, so we only need to compute $|C|$ expansions
and not $[\G:\G_0(N)]$.

\section{Petersson Products: Haberland-type Formulas}

Now that we know how to compute the Fourier expansion of $f|_k\ga$ for any
$\ga\in\G$ (and even $\ga\in\GL_2^+(\Q)$), we apply this to the computation
of Petersson products.

\subsection{Preliminary Formulas}

Although this has been explained in several places, for instance in
\cite{ANTS} and \cite{Coh-Str}, it is necessary to reproduce the statements
and proofs, since we will need some important modifications. In this
section we always assume that $k$ is an integer such that $k\ge2$, so that
$(X-\tau)^{k-2}$ is a polynomial.

In what follows, $f$ and $g$ will denote two modular forms in the space
$M_k(\G_0(N),\chi)$, and as above we denote by $(\ga_j)_{1\le j\le r}$ a set
of right coset representatives of the full modular group $\G$ modulo $\G_0(N)$,
so that $\G=\bigsqcup_{j=1}^r\G_0(N)\ga_j$. Finally, we set $f_j=f|_k\ga_j$
and $g_j=g|_k\ga_j$.

It is clear that for any $\al\in\G$ there exist an index
which by abuse of notation we will write as $\al(j)$, and an
element $\de_j(\al)\in\G_0(N)$ such that $\ga_j\al=\de_j(\al)\ga_{\al(j)}$,
and the map $j\mapsto\al(j)$ is a bijection of $[1,r]$.

\begin{definition} For any $j\in[1,r]$ and $Z_j\in\ov{\H}$ we define
  $$G_j(Z_j;\tau)=\int_{Z_j}^\tau\ov{g_j(\tau_2)}(\tau-\ov{\tau_2})^{k-2}\,d\ov{\tau_2}\;.$$\end{definition}

Note that this function is essentially an \emph{Eichler integral} of $g_j$, so
will have quasi-modularity properties in weight $2-k$. More precisely:

\begin{proposition} Keep the above notation. We have
  $$(G_j(Z_j;\tau)|_{2-k}\al)(\tau)=\ov{\chi(\de_j(\al))}\left(G_{\al(j)}(Z_{\al(j)};\tau)-P_{\al(j)}(\al;\tau)\right)\;,$$
  where as usual $\chi\left(\psmm{a}{b}{c}{d}\right)=\chi(d)$, and $P_j$ is
  the \emph{polynomial} in $\tau$
  $$P_j(\al;\tau)=\int_{Z_j}^{\al^{-1}\left(Z_{\al^{-1}(j)}\right)}\ov{g_j(\tau_2)}(\tau-\ov{\tau_2})^{k-2}\,d\ov{\tau_2}\;.$$
\end{proposition}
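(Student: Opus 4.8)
The plan is to prove the quasi-modularity of $G_j(Z_j;\tau)$ by a direct computation using the change-of-variables $\tau_2 \mapsto \al(\tau_2)$ in the defining integral, together with the modularity of $g_j$ under $\G_0(N)$.

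First I would unravel the definition. By definition, $(G_j(Z_j;\tau)|_{2-k}\al)(\tau) = (\det\al)^{k-1}(c\tau+d)^{k-2}G_j(Z_j;\al\tau)$ for $\al=\psmm{a}{b}{c}{d}$ (taking $\det\al=1$ since $\al\in\G$), so I need to understand $G_j(Z_j;\al\tau)=\int_{Z_j}^{\al\tau}\ov{g_j(\tau_2)}(\al\tau-\ov{\tau_2})^{k-2}\,d\ov{\tau_2}$. The natural move is to substitute $\tau_2 = \al(u)$, i.e. $u=\al^{-1}(\tau_2)$, so that as $\tau_2$ runs from $Z_j$ to $\al\tau$, the variable $u$ runs from $\al^{-1}(Z_j)$ to $\tau$. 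Here I would carefully split the path: $\int_{Z_j}^{\al\tau} = \int_{Z_j}^{\al(Z_{\al(j)})} + \int_{\al(Z_{\al(j)})}^{\al\tau}$, so that the second integral, after the substitution, becomes an integral from $Z_{\al(j)}$ to $\tau$ — this is what produces $G_{\al(j)}(Z_{\al(j)};\tau)$ — while the first integral, after substitution, runs from $\al^{-1}(Z_j)$ to $Z_{\al(j)}$, i.e. it is (up to sign and relabeling indices via the bijection $j\mapsto\al(j)$) exactly $-P_{\al(j)}(\al;\tau)$ as defined in the statement. I should double-check the index bookkeeping: applying the proposition's relation $\ga_{j'}\al = \de_{j'}(\al)\ga_{\al(j')}$ with $j' = \al^{-1}(j)$ (so $\al(j')=j$), and I'll need that $\al^{-1}(j) = (\al^{-1})(j)$ in the notation where $\al^{-1}$ also induces a bijection, which follows formally.

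The core computation is tracking how the integrand transforms. Under $\tau_2=\al(u)$ we have $d\ov{\tau_2} = \ov{(cu+d)^{-2}}\,d\ov{u}$ (again using $\det\al=1$), and $\al\tau - \ov{\al u}$: since $\al\tau - \al u = (\tau-u)/((c\tau+d)(cu+d))$, taking the relevant combination gives $\al\tau-\ov{\al u} = (\tau-\ov{u})/((c\tau+d)\ov{(cu+d)})$. Raising to the $(k-2)$ power and combining with the $\ov{(cu+d)^{-2}}$ from $d\ov{\tau_2}$ yields a factor $(c\tau+d)^{-(k-2)}\ov{(cu+d)}^{-k}$. For the remaining $\ov{g_j(\tau_2)} = \ov{g_j(\al u)}$: here I invoke that $g_j = g|_k\ga_j$ and $\ga_j\al = \de_j(\al)\ga_{\al(j)}$, hence $g_j|_k\al = g|_k\ga_j\al = g|_k\de_j(\al)\ga_{\al(j)} = \chi(\de_j(\al))\,g|_k\ga_{\al(j)} = \chi(\de_j(\al))\,g_{\al(j)}$ (using the modularity $g|_k\de = \chi(\de)g$ for $\de\in\G_0(N)$); so $g_j(\al u) = \chi(\de_j(\al))(cu+d)^k g_{\al(j)}(u)$, where I should be careful that the $(cu+d)$ here is the automorphy factor of $\al$ acting after $\ga_j$ — a small consistency check that the cocycle works out. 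Taking complex conjugates contributes $\ov{\chi(\de_j(\al))}\,\ov{(cu+d)}^{k}\,\ov{g_{\al(j)}(u)}$, and the $\ov{(cu+d)}^{k}$ cancels the $\ov{(cu+d)}^{-k}$ from before. After collecting everything, the $(c\tau+d)^{k-2}$ from the slash action cancels the $(c\tau+d)^{-(k-2)}$, leaving precisely $\ov{\chi(\de_j(\al))}$ times an integral in $u$ of $\ov{g_{\al(j)}(u)}(\tau-\ov u)^{k-2}\,d\ov u$, which by the path splitting is $\ov{\chi(\de_j(\al))}(G_{\al(j)}(Z_{\al(j)};\tau) - P_{\al(j)}(\al;\tau))$.

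The main obstacle will be the index and base-point bookkeeping — verifying that the "tail" integral from $\al^{-1}(Z_j)$ to $Z_{\al(j)}$ matches the stated $P_{\al(j)}(\al;\tau)$, which requires correctly identifying $\al^{-1}(Z_{\al^{-1}(j)})$ as the right endpoint after relabeling, and confirming the sign. A secondary point worth care is that $P_j(\al;\tau)$ really is a polynomial in $\tau$: this is because $(\tau-\ov{\tau_2})^{k-2}$ expands by the binomial theorem into $\sum_i \binom{k-2}{i}\tau^i(-\ov{\tau_2})^{k-2-i}$, and integrating term by term over a fixed path (independent of $\tau$) yields a polynomial of degree $k-2$ in $\tau$ with constant coefficients — this uses $k\ge 2$ so that $k-2$ is a nonnegative integer, exactly the standing hypothesis of the section. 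I would also remark that the cases where $g_j$ has a nonzero constant term (non-cuspidal) cause no trouble at this stage since we are only using modularity, not decay; convergence issues at cusps are deferred.
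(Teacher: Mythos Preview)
Your proof is correct and is precisely the standard argument: change variables $\tau_2\mapsto\al(\tau_2)$, use $g_j|_k\al=\chi(\de_j(\al))g_{\al(j)}$ via $\ga_j\al=\de_j(\al)\ga_{\al(j)}$, and split the resulting path through $Z_{\al(j)}$. The paper does not actually spell out a proof of this proposition---it is stated as background with a pointer to \cite{ANTS} and \cite{Coh-Str}---but the computation there is exactly the one you describe, so there is nothing to contrast.
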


\begin{corollary} Keep the notation of the proposition. For any $A$ and $B$ in
  $\ov{\H}$ we have
  $$\left(\int_A^B-\int_{\al(A)}^{\al(B)}\right)\sum_{1\le j\le r}f_j(\tau)G_j(Z_j;\tau)\,d\tau=\int_A^B\sum_{1\le j\le r}f_j(\tau)P_j(\al;\tau)\,d\tau\;.$$
\end{corollary}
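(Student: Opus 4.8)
The plan is to sum the transformation formula of the Proposition over all cosets and then unwind the slash action. First I would apply the weight-$(2-k)$ slash operator to both sides of the identity
$$(G_j(Z_j;\tau)|_{2-k}\al)(\tau)=\ov{\chi(\de_j(\al))}\left(G_{\al(j)}(Z_{\al(j)};\tau)-P_{\al(j)}(\al;\tau)\right)$$
and multiply through by $f_j(\tau)$. The key observation is that $f_j(\tau)=f|_k\ga_j(\tau)$ transforms in weight $k$, while $G_j(Z_j;\tau)$ transforms in weight $2-k$, so the product $f_j(\tau)G_j(Z_j;\tau)$ transforms in weight $2$ up to the character twist: using $f_j|_k\al=\chi(\de_j(\al))f_{\al(j)}$ (which follows from $\ga_j\al=\de_j(\al)\ga_{\al(j)}$ and the definition of $f_j$), the character factors $\chi(\de_j(\al))$ and $\ov{\chi(\de_j(\al))}$ cancel, and we get
$$\bigl(f_j(\tau)G_j(Z_j;\tau)\bigr)\big|_{2}\al=f_{\al(j)}(\tau)\bigl(G_{\al(j)}(Z_{\al(j)};\tau)-P_{\al(j)}(\al;\tau)\bigr)\;.$$

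Next I would sum over $j\in[1,r]$. Since $j\mapsto\al(j)$ is a bijection of $[1,r]$, reindexing the right-hand side gives
$$\left(\sum_{1\le j\le r}f_j(\tau)G_j(Z_j;\tau)\right)\Bigg|_{2}\al=\sum_{1\le j\le r}f_j(\tau)G_j(Z_j;\tau)-\sum_{1\le j\le r}f_j(\tau)P_j(\al;\tau)\;.$$
In other words, writing $\Phi(\tau)=\sum_j f_j(\tau)G_j(Z_j;\tau)$ and $\Psi(\tau)=\sum_j f_j(\tau)P_j(\al;\tau)$, we have $\Phi|_2\al=\Phi-\Psi$. A weight-$2$ object $\Phi(\tau)\,d\tau$ is, by the usual identification, a differential form, and $\Phi|_2\al(\tau)\,d\tau=\Phi(\al\tau)\,d(\al\tau)$; hence for any $A,B\in\ov{\H}$,
$$\int_A^B\Phi|_2\al(\tau)\,d\tau=\int_A^B\Phi(\al\tau)\,d(\al\tau)=\int_{\al(A)}^{\al(B)}\Phi(\tau)\,d\tau\;,$$
by the change of variables $\tau\mapsto\al\tau$. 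Substituting $\Phi|_2\al=\Phi-\Psi$ on the left and rearranging yields exactly
$$\left(\int_A^B-\int_{\al(A)}^{\al(B)}\right)\Phi(\tau)\,d\tau=\int_A^B\Psi(\tau)\,d\tau\;,$$
which is the claimed identity.

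The only point requiring any care is the bookkeeping of the character twists in the summation step: one must check that the factor $\ov{\chi(\de_j(\al))}$ appearing in the Proposition is the exact inverse of the factor $\chi(\de_j(\al))$ produced by $f_j|_k\al=\chi(\de_j(\al))f_{\al(j)}$, so that the product is genuinely weight $2$ with trivial character and the reindexing $j\mapsto\al(j)$ goes through cleanly. This is immediate from $\chi\left(\psmm{a}{b}{c}{d}\right)=\chi(d)$ and the fact that both factors come from the same element $\de_j(\al)\in\G_0(N)$, so there is no real obstacle; everything else is a formal manipulation of the slash action and a single change of variables in the integral.
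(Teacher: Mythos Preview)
Your proof is correct and is exactly the argument the paper has in mind: the paper states this as a corollary of the proposition without proof, and the natural derivation is precisely yours --- multiply the transformation law by $f_j|_k\al=\chi(\de_j(\al))f_{\al(j)}$ so the character factors cancel, sum over $j$ and reindex via the bijection $j\mapsto\al(j)$ to obtain $\Phi|_2\al=\Phi-\Psi$, then integrate and use that a weight-$2$ slash is pullback of the differential $\Phi(\tau)\,d\tau$.
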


The main theorem proved for instance in \cite{Coh-Str}, but which is
an immediate consequence of Stokes's theorem, is the following:

\begin{theorem} Let $H$ be some subgroup of $\G$ of finite index $s=[\G:H]$,
  and let $D(H)$ denote a fundamental domain for $H$ whose boundary
  $\partial(D(H))$ is a hyperbolic polygon. Then for any choice of the $Z_j$
  we have
  $$rs(2i)^{k-1}<f,g>_{\G_0(N)}=\int_{\partial(D(H))}\sum_{1\le j\le r}f_j(\tau)G_j(Z_j;\tau)\,d\tau\;.$$
\end{theorem}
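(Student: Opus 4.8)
The plan is to deduce the theorem from Stokes's theorem applied to the differential form $\omega=\sum_{1\le j\le r}f_j(\tau)G_j(Z_j;\tau)\,d\tau$ on a fundamental domain for $H$. First I would observe that $\omega$ is a holomorphic $1$-form on $\H$ (each $f_j$ is holomorphic and each $G_j(Z_j;\tau)$ is holomorphic in $\tau$ even though it involves an antiholomorphic integration variable), hence closed, so that by Stokes
$$0=\int_{D(H)}d\omega=\int_{\partial(D(H))}\omega\;,$$
which looks like it gives $0$ rather than the Petersson product. The point is that $\omega$ is \emph{not} invariant under $H$, so the contributions of $\partial(D(H))$ coming from $H$-equivalent edges do not cancel; instead the discrepancy is exactly a multiple of $\langle f,g\rangle$. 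So the real content is in relating $\int_{\partial(D(H))}\omega$ to the Petersson integral.

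The key computation is the transformation behaviour of $\omega$. For $\al\in H\subset\G$, write $\ga_j\al=\de_j(\al)\ga_{\al(j)}$ as in the setup. Then $f_j|_k\al=\ov{\chi(\de_j(\al))}f_{\al(j)}$ and, by the Proposition, $G_j(Z_j;\cdot)|_{2-k}\al=\ov{\chi(\de_j(\al))}\bigl(G_{\al(j)}(Z_{\al(j)};\cdot)-P_{\al(j)}(\al;\cdot)\bigr)$. Since $f_j$ has weight $k$ and $G_j$ has weight $2-k$, the product $f_jG_j\,d\tau$ has weight $0$ — i.e.\ $f_j(\tau)G_j(Z_j;\tau)\,d\tau$ transforms under $\al$ with a trivial automorphy factor apart from the character values, which here multiply to $|\chi(\de_j(\al))|^2=1$. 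Hence, as the Corollary already records,
$$\Bigl(\int_A^B-\int_{\al(A)}^{\al(B)}\Bigr)\omega=\int_A^B\sum_{1\le j\le r}f_j(\tau)P_j(\al;\tau)\,d\tau\;,$$
so when we pair up edges of $\partial(D(H))$ identified by side-pairing transformations $\al$, the boundary integral collapses to a sum of such polynomial-period terms $\int f_jP_j(\al;\tau)\,d\tau$.

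Next I would unwind those polynomial-period terms back into an honest area integral. Writing $P_j(\al;\tau)=\int_{Z_j}^{\al^{-1}(Z_{\al^{-1}(j)})}\ov{g_j(\tau_2)}(\tau-\ov{\tau_2})^{k-2}\,d\ov{\tau_2}$ and using the freedom in choosing the base points $Z_j$, one telescopes the sum over the side-pairings of $H$: the contributions reorganise into $\sum_j\iint f_j(\tau)\,\ov{g_j(\tau_2)}(\tau-\ov{\tau_2})^{k-2}$ integrated over a region that, after the change of variables $\tau=\tau_2$ forced by the closedness/exactness of the appropriate $2$-form $d\bigl(f_j(\tau)(\tau-\ov{\tau_2})^{k-2}\bigr)$, becomes $s$ copies of a fundamental domain for $\G_0(N)$ against the Petersson measure. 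Concretely, the standard manipulation is
$$\int_{\partial(D(H))}f_j(\tau)G_j(Z_j;\tau)\,d\tau=\iint_{D(H)}f_j(\tau)\ov{g_j(\tau)}(\tau-\ov{\tau})^{k-2}\,d\tau\,d\ov{\tau}\;,$$
and $(\tau-\ov\tau)^{k-2}\,d\tau\,d\ov\tau=(2i\Im\tau)^{k-2}(-2i)\,dx\,dy$ produces the factor $(2i)^{k-1}$; summing over the $j$ and over the $s=[\G:H]$ translates of a $\G_0(N)$-fundamental domain inside $D(H)$ gives the claimed $rs(2i)^{k-1}\langle f,g\rangle_{\G_0(N)}$, where the extra $r=[\G:\G_0(N)]$ appears because $\sum_j\iint_{D(\G_0(N))}f_j\ov{g_j}(\ldots)=r\iint_{D(\G_0(N))}f\ov g(\ldots)$ by the change of variables $\tau\mapsto\ga_j\tau$.

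The main obstacle is purely bookkeeping: tracking exactly how the edges of the hyperbolic polygon $\partial(D(H))$ are matched by side-pairing elements $\al\in H$, making sure the base-point polynomials $P_j$ cancel in pairs up to the one surviving area term, and verifying that the resulting region of integration is precisely $s$ disjoint translates of a $\G_0(N)$-fundamental domain (no over- or under-counting on the boundary, where the null-set of cusps and edge identifications must be handled). Everything else — holomorphy of $\omega$, the weight-$0$ transformation law, the $(2i)^{k-1}$ constant from $(\tau-\ov\tau)^{k-2}\,d\tau\wedge d\ov\tau$ — is routine once the Proposition and Corollary are in hand. Since the statement explicitly allows \emph{any} choice of the $Z_j$, a clean way to finish is to first prove it for one convenient choice (e.g.\ all $Z_j$ at a common cusp or all equal), then note that changing the $Z_j$ alters $\omega$ by an exact form plus boundary terms that cancel around the closed polygon $\partial(D(H))$, so the right-hand side is independent of the choice.
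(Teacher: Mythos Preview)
Your opening claim is the error: $G_j(Z_j;\tau)$ is \emph{not} holomorphic in $\tau$. The upper limit of the antiholomorphic integral is $\tau$ itself, so differentiating under the integral sign gives
\[
\frac{\partial}{\partial\ov\tau}\,G_j(Z_j;\tau)=\ov{g_j(\tau)}(\tau-\ov\tau)^{k-2}\;,
\]
which is nonzero. Consequently $d\omega\neq 0$; in fact
\[
d\omega=\sum_{1\le j\le r}f_j(\tau)\ov{g_j(\tau)}(\tau-\ov\tau)^{k-2}\,d\ov\tau\wedge d\tau=(2i)^{k-1}\sum_{1\le j\le r}f_j(\tau)\ov{g_j(\tau)}\,y^{k-2}\,dx\,dy\;.
\]
This is precisely the Petersson integrand, and Stokes now gives the theorem in one step:
\[
\int_{\partial D(H)}\omega=\int_{D(H)}d\omega=(2i)^{k-1}\,s\int_{D(\G)}\sum_j f_j\ov{g_j}\,y^{k-2}\,dx\,dy=(2i)^{k-1}\,s\int_{D(\G_0(N))}f\ov g\,y^{k-2}\,dx\,dy\;,
\]
using that $\sum_j f_j\ov{g_j}y^{k-2}$ is $\G$-invariant (so $D(H)$ counts as $s$ copies of $D(\G)$) and unfolding over the coset representatives $\ga_j$. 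This is exactly what the paper means by ``an immediate consequence of Stokes's theorem''.

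Your detour through side-pairings and the polynomials $P_j$ is not what proves this theorem; that machinery (the Proposition and its Corollary) is used \emph{afterwards} to evaluate the boundary integral in terms of simple periods. The ``standard manipulation'' you write near the end,
\[
\int_{\partial D(H)}f_j(\tau)G_j(Z_j;\tau)\,d\tau=\iint_{D(H)}f_j(\tau)\ov{g_j(\tau)}(\tau-\ov\tau)^{k-2}\,d\tau\,d\ov\tau\;,
\]
is the correct Stokes step and flatly contradicts your earlier assertion that $d\omega=0$. Once you drop the holomorphy claim, your argument collapses to the one-line proof above, and the independence of the $Z_j$ is automatic since $d\omega$ does not see them.
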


Note that the subgroup $H$ can be chosen arbitrarily. To simplify, we will
choose it so that $\partial(D(H))$ is a hyperbolic quadrilateral
$(A_1,A_2,A_3,A_4)$ such that there exist an element $\al_1\in \G$ sending
$[A_1,A_2]$ to $[A_3,A_2]$ and $\al_2\in\G$ sending $[A_3,A_4]$ to
$[A_1,A_4]$. We thus have
$$\int_{\partial(D(H))}=\left(\int_{A_1}^{A_2}-\int_{\al_1(A_1)}^{\al_1(A_2)}\right)+\left(\int_{A_3}^{A_4}-\int_{\al_2(A_3)}^{\al_2(A_4)}\right)\;.$$
Applying the above corollary and the theorem we deduce the following.

\begin{definition} The forms $f$ and $g$ being implicit, we define
  $$G_j(A,B;C,D)=\int_A^B\int_C^Df_j(\tau)\ov{g_j(\tau_2)}(\tau-\ov{\tau_2})^{k-2}\,d\tau\,d\ov{\tau_2}\;.$$
\end{definition}

\begin{corollary}\label{cortmp1} We have
  $$rs(2i)^{k-1}<f,g>_{\G_0(N)}=\sum_{1\le j\le r}(I_j(f,g)+J_j(f,g))\;,$$
  where
  \begin{align*}
    I_j(f,g)&=G_j\left(A_1,A_2;Z_j,\al_1^{-1}\left(Z_{\al_1^{-1}(j)}\right)\right)\\
    J_j(f,g)&=G_j\left(A_3,A_4;Z_j,\al_2^{-1}\left(Z_{\al_2^{-1}(j)}\right)\right)\;.
  \end{align*}
\end{corollary}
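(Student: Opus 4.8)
The plan is to combine the corollary to the quasimodularity proposition with the main Stokes-type theorem, specialized to the quadrilateral fundamental domain. First I would write out the right-hand side of the theorem as
$$rs(2i)^{k-1}<f,g>_{\G_0(N)}=\int_{\partial(D(H))}\sum_{1\le j\le r}f_j(\tau)G_j(Z_j;\tau)\,d\tau\;,$$
and substitute the decomposition
$$\int_{\partial(D(H))}=\left(\int_{A_1}^{A_2}-\int_{\al_1(A_1)}^{\al_1(A_2)}\right)+\left(\int_{A_3}^{A_4}-\int_{\al_2(A_3)}^{\al_2(A_4)}\right)$$
coming from the pairing of the four sides of the quadrilateral. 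This splits the Petersson product into two contributions, one attached to $\al_1$ and one to $\al_2$, which will become $\sum_j I_j$ and $\sum_j J_j$ respectively.

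Next I would apply the corollary to the proposition, first with $\al=\al_1$, $A=A_1$, $B=A_2$, which gives
$$\left(\int_{A_1}^{A_2}-\int_{\al_1(A_1)}^{\al_1(A_2)}\right)\sum_{1\le j\le r}f_j(\tau)G_j(Z_j;\tau)\,d\tau=\int_{A_1}^{A_2}\sum_{1\le j\le r}f_j(\tau)P_j(\al_1;\tau)\,d\tau\;,$$
and then with $\al=\al_2$, $A=A_3$, $B=A_4$ to handle the second pair of sides. At this point I would unfold the definition
$$P_j(\al;\tau)=\int_{Z_j}^{\al^{-1}\left(Z_{\al^{-1}(j)}\right)}\ov{g_j(\tau_2)}(\tau-\ov{\tau_2})^{k-2}\,d\ov{\tau_2}$$
and plug $\al=\al_1$ (so the upper limit is $\al_1^{-1}(Z_{\al_1^{-1}(j)})$, the lower limit $Z_j$) into the first term, recognizing the resulting iterated integral against $f_j(\tau)\,d\tau$ over $[A_1,A_2]$ as precisely $G_j(A_1,A_2;Z_j,\al_1^{-1}(Z_{\al_1^{-1}(j)}))=I_j(f,g)$ by the definition of $G_j(A,B;C,D)$. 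The second term with $\al=\al_2$ over $[A_3,A_4]$ similarly becomes $J_j(f,g)$.

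The step I expect to require the most care is bookkeeping the index bijection $j\mapsto\al(j)$ and making sure the reindexing under $\al_1$ and $\al_2$ is consistent: in the corollary the polynomial $P_j(\al;\tau)$ already carries the shifted point $\al^{-1}(Z_{\al^{-1}(j)})$, and one must check that summing over $j$ after the substitution matches the sum that appears in the definitions of $I_j$ and $J_j$ without an extra permutation of the summation index. Once the two applications of the corollary are lined up with the two halves of $\int_{\partial(D(H))}$ and the definition of $G_j(A,B;C,D)$ is applied termwise, adding the two contributions gives exactly $\sum_{1\le j\le r}(I_j(f,g)+J_j(f,g))$, completing the proof.
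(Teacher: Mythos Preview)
Your proposal is correct and follows exactly the route the paper takes: the paper simply says ``Applying the above corollary and the theorem we deduce the following,'' and your write-up is precisely the unpacking of that sentence---use the Stokes-type theorem, split the boundary into the two side-pairs, apply the corollary with $\al=\al_1$ and then with $\al=\al_2$, and identify the resulting integrals with $G_j(A,B;C,D)$. Your worry about reindexing is harmless: the corollary already delivers $\sum_j f_j P_j(\al;\tau)$ with the correct index, so no further permutation is needed.
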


\subsection{The Cuspidal Case}

We now distinguish whether both $f$ and $g$ are cusp forms or otherwise.

Assume first that $f$ and $g$ are both cusp forms. As in \cite{Coh-Str}
we choose $H=\G(2)$, which has index $6$ in $\G$, and we can take for
$D(H)$ the hyperbolic quadrilateral with $A_1=1$, $A_2=i\infty$,
$A_3=-1$, and $A_4=0$, so that $\al_1=T^{-2}=\psmm{1}{-2}{0}{1}$ and
$\al_2=\psmm{1}{0}{2}{1}$. We also choose $Z_j=0$ for all $j$, so that
$\al_2^{-1}(\Z_{\al_2^{-1}(j)})=\al_2^{-1}(0)=0$, hence $P_j(\al_2;\tau)=0$,
so that $J_j(f,g)=0$ for all $j$. On the other hand
$$P_j(\al_1;\tau)=\int_0^2\ov{g_j(\tau_2)}(\tau-\ov{\tau_2})^{k-2}\,d\ov{\tau_2}\;,$$
so that
$$6r(2i)^{k-1}<f,g>_{\G_0(N)}=\sum_{1\le j\le r}G_j(1,i\infty;0,2)\;.$$
Shifting both $\tau$ and $\tau_2$ by $1$ gives the following:

\begin{corollary}\label{corhab} Assume that $f$ and $g$ are both cusp forms.
  We then have
  \begin{align*}6r(2i)^{k-1}<f,g>_{\G_0(N)}&=\sum_{1\le j\le r}G_j(0,i\infty;-1,1)\\
    &=\sum_{1\le j\le r}\int_0^{i\infty}\int_{-1}^1f_j(\tau)\ov{g_j(\tau_2)}(\tau-\ov{\tau_2})^{k-2}\,d\tau\,d\ov{\tau_2}\\
    &=\sum_{1\le j\le r}\sum_{0\le n\le k-2}(-1)^n\binom{k-2}{n}I_{k-2-n}(0,i\infty,f_j)\ov{I_n(-1,1,g_j)}\;,\end{align*}
  where we have set
  $$I_n(A,B,f)=\int_A^B\tau^nf(\tau)\,d\tau\;.$$
\end{corollary}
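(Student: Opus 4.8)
The plan is to start from the formula
$$6r(2i)^{k-1}<f,g>_{\G_0(N)}=\sum_{1\le j\le r}G_j(1,i\infty;0,2)$$
which was derived just above the statement (via Corollary~\ref{cortmp1}, the choice $H=\G(2)$ with the quadrilateral $A_1=1$, $A_2=i\infty$, $A_3=-1$, $A_4=0$, and the vanishing $J_j(f,g)=0$ coming from $Z_j=0$ and $\al_2(0)=0$). So the only thing to establish is the chain of three equalities in the corollary. For the first equality, I would perform the change of variables $\tau\mapsto\tau+1$, $\tau_2\mapsto\tau_2+1$ in the double integral defining $G_j(1,i\infty;0,2)$. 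Under this shift the outer limits $1$ and $i\infty$ become $0$ and $i\infty$, the inner limits $0$ and $2$ become $-1$ and $1$, and the integrand $f_j(\tau)\ov{g_j(\tau_2)}(\tau-\ov{\tau_2})^{k-2}$ becomes $f_j(\tau+1)\ov{g_j(\tau_2+1)}(\tau-\ov{\tau_2})^{k-2}$ since $(\tau+1)-\ov{(\tau_2+1)}=\tau-\ov{\tau_2}$. The key point is that $f_j(\tau+1)=f_j(\tau)$ and likewise for $g_j$: this holds because $f_j=f|_k\ga_j$ is a Fourier expansion at the cusp $\ga_j(i\infty)$ of width dividing... actually one must be slightly careful — the width $w(\ga_j)$ need not be $1$, so $f_j$ is genuinely periodic of period $1$ only when we allow fractional $q$-powers $q^{n/w(\ga_j)}$, but $q^{n/w}$ is still invariant under $\tau\mapsto\tau+1$ precisely when $w\mid n$... no: $e^{2\pi i n(\tau+1)/w}=e^{2\pi i n/w}e^{2\pi i n\tau/w}$. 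So genuine $1$-periodicity of $f_j$ is what we need and it does hold because $f|_k\ga_j(\tau+1)=f|_k(\ga_j T)(\tau)$ and $\ga_j T$ represents the same cusp — hmm, this requires $\ga_j T\ga_j^{-1}\in\G_0(N)$ which is false in general. The honest resolution: this double integral identity is being used only as a formal manipulation of absolutely convergent integrals (both $f,g$ are cusp forms so everything converges), and the shift $\tau\to\tau+1$ is simply a substitution in the variable of integration — it does not require $f_j$ to be $1$-periodic; after substitution the integrand is $f_j(\tau+1)\ov{g_j(\tau_2+1)}(\tau-\ov{\tau_2})^{k-2}$. Then one observes $f_j(\tau+1)=(f|_k\ga_j)(\tau+1)=(f|_k\ga_j T)(\tau)$, and since right-translates $\ga_j T$ again form a coset system (the map $j\mapsto$ (index of coset of $\ga_j T$) is a bijection), summing over $j$ the quantity $\sum_j f_j(\tau+1)\ov{g_j(\tau_2+1)}(\tau-\ov{\tau_2})^{k-2}$ equals $\sum_j f_j(\tau)\ov{g_j(\tau_2)}(\tau-\ov{\tau_2})^{k-2}$ after reindexing. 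This gives the first equality.

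For the second equality I would just expand $G_j(0,i\infty;-1,1)$ by its definition as the iterated integral $\int_0^{i\infty}\int_{-1}^1 f_j(\tau)\ov{g_j(\tau_2)}(\tau-\ov{\tau_2})^{k-2}\,d\tau\,d\ov{\tau_2}$ — this is immediate from the Definition preceding Corollary~\ref{cortmp1}. Convergence at the cusp $\tau\to i\infty$ is guaranteed since $f$ is a cusp form (so $f_j$ decays exponentially) and the $\tau_2$-integral is over a compact path, while the polynomial factor grows only polynomially; near $\tau\to 0$ one may either note that $f_j$ is bounded there or invoke the original derivation which is valid on the fundamental domain. For the third equality, I expand the polynomial $(\tau-\ov{\tau_2})^{k-2}$ by the binomial theorem,
$$(\tau-\ov{\tau_2})^{k-2}=\sum_{0\le n\le k-2}\binom{k-2}{n}(-1)^n\tau^{k-2-n}\ov{\tau_2}^{\,n}\;,$$
substitute, and use Fubini to separate the double integral into a product: the $\tau$-part becomes $\int_0^{i\infty}\tau^{k-2-n}f_j(\tau)\,d\tau=I_{k-2-n}(0,i\infty,f_j)$ and the $\tau_2$-part becomes $\int_{-1}^1\ov{\tau_2}^{\,n}\,\ov{g_j(\tau_2)}\,d\ov{\tau_2}=\ov{\int_{-1}^1\tau_2^{\,n}g_j(\tau_2)\,d\tau_2}=\ov{I_n(-1,1,g_j)}$, where conjugation pulls out of the integral because the path $[-1,1]$ and its parametrization are real. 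Collecting signs and binomial coefficients yields exactly the stated triple sum.

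The main obstacle, and the only genuinely delicate point, is the very first step: justifying that after the substitution $\tau\mapsto\tau+1$, $\tau_2\mapsto\tau_2+1$ one may reindex the sum over $j$ to recover $\sum_j G_j(0,i\infty;-1,1)$. I would handle this by making explicit that $(f|_k(\ga_j T))_{1\le j\le r}$ is, up to permutation of the index set, the same family as $(f|_k\ga_j)_{1\le j\le r}$ — because $T\in\G$ acts by right multiplication on the cosets $\G_0(N)\backslash\G$ as a bijection — and crucially that $g$ is transformed by the \emph{same} permutation since it is acted on by the same matrices $\ga_j$; the factor $(\tau-\ov{\tau_2})^{k-2}$ is permutation-blind. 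Thus $\sum_j f_j(\tau+1)\ov{g_j(\tau_2+1)}=\sum_j f_{\sigma(j)}(\tau)\ov{g_{\sigma(j)}(\tau_2)}=\sum_j f_j(\tau)\ov{g_j(\tau_2)}$ for the relevant permutation $\sigma$. Once this bookkeeping is in place the remaining steps are routine: binomial expansion, Fubini (legitimate by absolute convergence since $f,g$ are cusp forms), and pulling complex conjugation through a real-path integral.
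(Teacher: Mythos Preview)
Your approach is essentially the paper's: the paper itself justifies the first equality with the single phrase ``shifting both $\tau$ and $\tau_2$ by $1$'', and the remaining equalities are just the definition of $G_j$ and the binomial expansion. You are in fact more careful than the paper in spelling out why the shift is legitimate after summing over $j$.

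Two small inaccuracies in your write-up are worth fixing. First, in the reindexing step you assert that $(f|_k(\ga_jT))_j$ is, up to permutation, the same family as $(f_j)_j$. This is false when $\chi\ne1$: one has $f|_k(\ga_jT)=\chi(\de_j(T))\,f_{T(j)}$ with a nontrivial scalar. The point that actually saves the argument is that $g$ picks up the \emph{same} scalar, so in the product $f_j(\tau+1)\ov{g_j(\tau_2+1)}$ the factor $\chi(\de_j(T))\ov{\chi(\de_j(T))}=|\chi(\de_j(T))|^2=1$ drops out, and then your reindexing goes through. You should say this explicitly rather than claim the families themselves coincide up to permutation. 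Second, your justification for $\int_{-1}^{1}\ov{\tau_2}^{\,n}\ov{g_j(\tau_2)}\,d\ov{\tau_2}=\ov{I_n(-1,1,g_j)}$ is misleading: the path of integration runs between the cusps $-1$ and $1$ through $\H$, not along the real axis. The identity holds for any path simply because $\ov{\int_\gamma h(z)\,dz}=\int_\gamma\ov{h(z)}\,d\ov z$ for an arbitrary contour $\gamma$; no reality of the path is needed.
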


The essential advantage of this formula is that we have reduced the computation
of a Petersson product, which is a double integral, to a small finite
number of single integrals, which are essentially the periods associated to
$f$ and $g$; this is in fact exactly the statement of Haberland's theorem.

The main problem is that, even though the Petersson product is defined when
only one of $f$ and $g$ is a cusp form, we cannot apply the above formula
since the period integrals will diverge for non cusp forms. We thus consider
the general case.

\subsection{The Noncuspidal but Convergent Case}

We now assume that $f$ and $g$ are in $M_k(\G_0(N),\chi)$, not necessarily
cusp forms. For the Petersson product to converge it is necessary and
sufficient that at each cusp either $f$ or $g$ vanishes. Equivalently,
for each $j\in[1,r]$ either $f_j$ or $g_j$ vanishes as $\tau\to i\infty$.
We denote by $E$ the subset of $j\in[1,r]$ such that $f_j$ vanishes as
$\tau\to i\infty$, so that if $j\notin E$ then $g_j$ vanishes as
$\tau\to i\infty$. Consider now $T=\psmm{1}{1}{0}{1}$ the usual translation
by $1$. As usual $\ga_jT=\de_j(T)\ga_{T(j)}$ for some bijection $j\mapsto T(j)$
and $\de_j(T)\in\G_0(N)$. Thus $f_j|_kT=\chi(\de_j(T))f|_{T(j)}$, and
it follows that both $E$ and its complement are stable by the bijection induced
by $T$.

For simplicity, we are going to choose $H=\G$, and as fundamental domain
the usual fundamental domain of the modular group, which has the advantage
of having a single cusp on its boundary. Thus as above, setting as usual
$\rho=e^{2\pi i/3}$, we have $A_1=\rho+1$, $A_2=i\infty$, $A_3=\rho$, and
$A_4=i$, with $\al_1=T^{-1}$ and $\al_2=S=\psmm{0}{-1}{1}{0}$.

We will choose $Z_j=i$ if $j\in E$ and $Z_j=i\infty$ if $j\notin E$. With
the notation of Corollary \ref{cortmp1} we have
$$I_j(f,g)=\int_{\rho+1}^{i\infty} f_j(\tau)P_j(T^{-1};\tau)\,d\tau\;,$$
with $$P_j(T^{-1};\tau)=\int_{Z_j}^{Z_{T(j)}+1}\ov{g_j(\tau_2)}(\tau-\ov{\tau_2})^{k-2}\,d\ov{\tau_2}\;.$$
Note that if $j\in E$ there is no convergence problem since $f_j(\tau)$ tends
to $0$ exponentially fast. On the other hand, if $j\notin E$ we have chosen
$Z_j=i\infty$, and we also have $T(j)\notin E$ by what we said above, so
$P_j(T^{-1};\tau)$ vanishes in that case. We thus have
$$\sum_{1\le j\le r}I_j(f,g)=\sum_{j\in E}G_j(\rho+1,i\infty;i,i+1)\;,$$
and we can again expand this by the binomial theorem as a linear combination
of products of two simple integrals, since the integral of $f_j(\tau)$
converges at $i\infty$.

Similarly, we have
$$J_j(f,g)=\int_{\rho}^i f_j(\tau)P_j(S;\tau)\,d\tau\;,$$
with
$$P_j(S;\tau)=\int_{Z_j}^{-1/Z_{S(j)}}\ov{g_j(\tau_2)}(\tau-\ov{\tau_2})^{k-2}\,d\ov{\tau_2}\;.$$
Here we must distinguish four cases.

\begin{enumerate}
\item If $j\in E$ and $j\in S(E)$ (or equivalently $S(j)\in E$)
  then $Z_j=i$ and $Z_{S(j)}=i$ so $-1/Z_{S(j)}=i$, hence $P_j(S;\tau)=0$.
\item If $j\in E$ and $j\notin S(E)$, so that $S(j)\notin E$ we have
  $Z_j=i$, $Z_{S(j)}=i\infty$, so $P_j$ is an integral from $i$ to $0$
  hence $J_j(f,g)=G_j(\rho,i;i,0)$. Note that since $S(j)\notin E$ by
  assumption $g_{S(j)}$ vanishes at $i\infty$, or equivalently $g_j$ vanishes
  at $0$ so the integral makes sense.
\item If $j\notin E$ and $j\in S(E)$, we have $Z_j=i\infty$, $Z_{S(j)}=i$,
  so $J_j(f,g)=G_j(\rho,i;i\infty,i)=-G_j(\rho,i;i,i\infty)$.
\item If $j\notin E$ and $j\notin S(E)$, we have $Z_j=i\infty$,
  $Z_{S(j)}=i\infty$, so $J_j(f,g)=G_j(\rho,i;i\infty,0)$.
\end{enumerate}

The changes of variable $\tau\mapsto S(\tau)$ and $\tau_2\mapsto S(\tau_2)$
show that $G_j(\rho,i;i,0)=G_{S(j)}(\rho+1,i;i,i\infty)$. Thus
$$\sum_{j\in E,\ j\notin S(E)}J_j(f,g)=\sum_{j\notin E,\ j\in S(E)}G_j(\rho+1,i;i,i\infty)\;.$$
Combining with (3), it follows by transitivity that
$$\left(\sum_{j\in E,\ j\notin S(E)}+\sum_{j\notin E,\ j\in S(E)}\right)J_j(f,g)=\sum_{j\notin E,\ j\in S(E)}G_j(\rho+1,\rho;i,i\infty)\;.$$
We have thus shown the following:

\begin{theorem}\label{petnoncusp} We have
  $$r(2i)^{k-1}<f,g>_{\G_0(N)}=S_1+S_2+S_3$$
  with
  \begin{align*}
    S_1&=\sum_{j\in E}G_j(\rho+1,i\infty;i,i+1)\;,\\
    S_2&=\sum_{j\notin E,\ j\in S(E)}G_j(\rho+1,\rho;i,i\infty)\;,\\
    S_3&=\sum_{j\notin E,\ j\notin S(E)}G_j(\rho,i;i\infty,0)\;,
  \end{align*}
  and each $G_j$ can be expressed as a linear combination of products of
  two \emph{convergent} single integrals by using the binomial theorem.
\end{theorem}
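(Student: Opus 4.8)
The statement is essentially a reorganisation of the discussion preceding it, so my plan is to apply Corollary~\ref{cortmp1}, carry out the case analyses of the two resulting sums, and then verify the convergence claim. First I would apply Corollary~\ref{cortmp1} with $H=\G$ (so $s=[\G:\G]=1$), with the quadrilateral $(A_1,A_2,A_3,A_4)=(\rho+1,i\infty,\rho,i)$ of the standard fundamental domain, for which $\al_1=T^{-1}$ and $\al_2=S=\psmm{0}{-1}{1}{0}$, and with the choice $Z_j=i$ for $j\in E$ and $Z_j=i\infty$ for $j\notin E$. Unwinding the definitions of $I_j(f,g)$ and $J_j(f,g)$, using that $\al_1^{-1}=T$ acts by translation and that $\al_2^{-1}$ acts as $\tau\mapsto-1/\tau$ on $\ov{\H}$ (and noting that, since $-I\in\G_0(N)$, the bijection $j\mapsto S(j)$ of $[1,r]$ induced by $S$ is an involution), this gives $I_j=G_j(\rho+1,i\infty;Z_j,Z_{T(j)}+1)$ and $J_j=G_j(\rho,i;Z_j,-1/Z_{S(j)})$, with $r(2i)^{k-1}<f,g>_{\G_0(N)}=\sum_{j}(I_j+J_j)$.

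Next I would dispose of the $I_j$ sum: since $E$ is stable under $j\mapsto T(j)$ (as already observed, because both $E$ and its complement are stable), for $j\notin E$ both ends $Z_j$ and $Z_{T(j)}+1$ of the inner integral equal $i\infty$, so $I_j=0$, while for $j\in E$ one gets $I_j=G_j(\rho+1,i\infty;i,i+1)$; hence $\sum_j I_j=S_1$. For the $J_j$ sum I would run the four-case split on whether $j\in E$ and whether $j\in S(E)$ (equivalently $S(j)\in E$): case~(1) gives $0$ because both ends of the inner integral equal $i$, and case~(4) gives the summand of $S_3$ directly. Cases~(2) and~(3) are the only ones needing work: the substitution $\tau\mapsto S\tau$, $\tau_2\mapsto S\tau_2$ — an isometry of $\H$ under which $G_j$ turns into $G_{S(j)}$, the two multiplier factors $\chi(\de_j(S))$ and $\ov{\chi(\de_j(S))}$ cancelling — rewrites the case~(2) summand $G_j(\rho,i;i,0)$ as $G_{S(j)}(\rho+1,i;i,i\infty)$, and reindexing by $j'=S(j)$ puts it over exactly the index set of case~(3); for each such $j'$ the inner integral $\int_i^{i\infty}$ is shared and the outer integrals combine as $\int_{\rho+1}^i-\int_\rho^i=\int_{\rho+1}^\rho$, producing $G_{j'}(\rho+1,\rho;i,i\infty)$. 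Summing then yields $\sum_j J_j=S_2+S_3$, and the claimed identity follows.

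Finally, for the assertion that each $G_j$ is a linear combination of products of two \emph{convergent} single integrals, I would expand $(\tau-\ov{\tau_2})^{k-2}$ by the binomial theorem, legitimate since $k\ge2$ is an integer, obtaining $$G_j(A,B;C,D)=\sum_{0\le n\le k-2}(-1)^n\binom{k-2}{n}\Bigl(\int_A^B\tau^{k-2-n}f_j(\tau)\,d\tau\Bigr)\Bigl(\int_C^D\ov{\tau_2}^{\,n}\ov{g_j(\tau_2)}\,d\ov{\tau_2}\Bigr)\;,$$ and then check convergence endpoint by endpoint: in $S_1$ the $\tau_2$-path $[i,i+1]$ is compact and $f_j$ ($j\in E$) decays at $i\infty$; in $S_2$ the $\tau$-path $[\rho+1,\rho]$ is compact and $g_j$ ($j\notin E$) decays at $i\infty$; in $S_3$ the $\tau$-path $[\rho,i]$ is compact, and the $\tau_2$-path from $i\infty$ to $0$ is fine at $i\infty$ because $g_j$ ($j\notin E$) decays there, and fine at the cusp $0$ because $g_j(\tau_2)=\tau'^{\,k}\chi(\de_j(S))g_{S(j)}(\tau')$ with $\tau'=-1/\tau_2\to i\infty$, and $g_{S(j)}$ decays exponentially there since $S(j)\notin E$ (i.e.\ $j\notin S(E)$), dominating the polynomial factor. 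I expect the main obstacle to be not analytic — Stokes's theorem is already packaged in Corollary~\ref{cortmp1} — but purely combinatorial: keeping straight the bijections $T$ and $S$ together with the $T$-stability of $E$ and the behaviour of $S(E)$, so that the four-case split is exhaustive and the reindexing in Step~3 is valid, and separately checking the convergence at the cusp $0$ occurring in $S_3$.
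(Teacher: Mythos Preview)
Your proposal is correct and follows essentially the same route as the paper: the same choice $H=\G$ with the standard fundamental domain, the same $Z_j$, the same $T$-stability argument for $\sum_jI_j=S_1$, the same four-case split for $J_j$, and the same substitution $\tau\mapsto S\tau$, $\tau_2\mapsto S\tau_2$ to turn case~(2) into case~(3) before combining the outer integrals. You are in fact a bit more explicit than the paper in two places --- noting that $-I\in\G_0(N)$ makes $j\mapsto S(j)$ an involution (so the reindexing is a bijection onto the case-(3) index set), and spelling out the convergence at the cusp $0$ in $S_3$ via $g_j(\tau_2)=\tau'^{\,k}\chi(\de_j(S))g_{S(j)}(\tau')$ --- but these are elaborations of the same argument, not a different one.
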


Note that if $f$ is a cusp form we have $E=[1,r]$ and only $S_1$ contributes,
and if both $f$ and $g$ are cusp forms, we can either use this theorem or
the formula given in Corollary \ref{corhab}.

\subsection{Computation of Partial Periods}

In all of the above formulas, using the notation of Corollary \ref{corhab}
we need to compute integrals of the form $I_n(a,b,f_j)$ and $I_n(a,b,g_j)$
for specific values of $(a,b)$ in the completed upper half-plane. Putting
them together for $0\le n\le k-2$, this means that we must compute the
\emph{partial periods}
$$P(a,b,F)(X)=\int_a^b(X-\tau)^{k-2}F(\tau)\,d\tau$$ for $F=f_j$ and all $j$.
For future reference, note the following important but trivial identity:

\begin{lemma}\label{lempab} For any $\ga\in\G$ we have
$$P(a,b,F|_k\ga)(X)=P(\ga(a),\ga(b),F)|_{2-k}\ga(X)\;.$$
\end{lemma}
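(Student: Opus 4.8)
The plan is to prove the identity by a single change of variables, reducing it to two elementary facts about the M\"obius action of $\ga=\psmm{A}{B}{C}{D}\in\G$ (for which $AD-BC=1$):
$$\ga(X)-\ga(\tau)=\dfrac{X-\tau}{(CX+D)(C\tau+D)}\qquad\text{and}\qquad\dfrac{d}{d\tau}\ga(\tau)=\dfrac{1}{(C\tau+D)^2}\;.$$
Both are one-line computations using $AD-BC=1$.

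First I would spell out the two slash operators with the usual conventions: $(F|_k\ga)(\tau)=(C\tau+D)^{-k}F(\ga(\tau))$, and for a polynomial $Q$, $(Q|_{2-k}\ga)(X)=(CX+D)^{k-2}Q(\ga(X))$ --- here the exponent $k-2=-(2-k)$ is exactly the degree of the polynomial $(X-\tau)^{k-2}$ occurring in $P$, so $P(\cdot,\cdot,F)|_{2-k}\ga$ is again a polynomial in $X$ and the right-hand side is well-defined. With these conventions the right-hand side of the lemma equals
$$(CX+D)^{k-2}\int_{\ga(a)}^{\ga(b)}(\ga(X)-w)^{k-2}F(w)\,dw\;.$$

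Next I substitute $w=\ga(\tau)$: the endpoints $\ga(a),\ga(b)$ pull back to $a,b$; by the second identity $dw=(C\tau+D)^{-2}\,d\tau$; the integrand factor becomes $F(w)=F(\ga(\tau))=(C\tau+D)^{k}(F|_k\ga)(\tau)$; and by the first identity $(\ga(X)-w)^{k-2}=(\ga(X)-\ga(\tau))^{k-2}=(X-\tau)^{k-2}(CX+D)^{-(k-2)}(C\tau+D)^{-(k-2)}$. Collecting exponents, all powers of $(CX+D)$ and of $(C\tau+D)$ cancel, leaving $\int_a^b(X-\tau)^{k-2}(F|_k\ga)(\tau)\,d\tau=P(a,b,F|_k\ga)(X)$, which is the left-hand side.

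There is no genuine obstacle: the computation is purely formal, and the substitution $w=\ga(\tau)$ is a biholomorphism of $\ov{\H}$ carrying the contour from $a$ to $b$ onto the contour from $\ga(a)$ to $\ga(b)$, so convergence matches on both sides. The only point requiring care is the bookkeeping of the two slash normalizations and the systematic use of $\det\ga=1$ in both M\"obius identities; once these are pinned down the identity follows at once. (If one wished to allow $\ga\in\GL_2^+(\Q)$ as elsewhere in the paper, the same argument goes through after inserting the appropriate powers of $\det\ga$ in the two identities and in the definitions of $|_k$ and $|_{2-k}$; for the lemma as stated, $\ga\in\G$ suffices.)
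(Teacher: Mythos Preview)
Your proof is correct and is exactly the standard change-of-variables computation the paper has in mind; the paper itself omits the argument entirely, calling the identity ``trivial,'' so there is nothing further to compare.
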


We also have the following immediate lemma:

\begin{lemma} Let $R_{k-2}(X)=\sum_{0\le n\le k-2}X^n/n!$ be the $(k-2)$nd
  partial sum of the exponential series. For all $m>0$ we have
  $$\int_a^{i\infty}(X-\tau)^{k-2}e^{2\pi mi\tau}\,d\tau=-e^{2\pi mia}\dfrac{(k-2)!}{(2\pi mi)^{k-1}}R_{k-2}(2\pi mi(X-a))\;.$$
\end{lemma}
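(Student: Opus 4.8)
The plan is to evaluate the integral directly by repeated integration by parts, peeling off one power of $(X-\tau)$ at a time. Write $I=\int_a^{i\infty}(X-\tau)^{k-2}e^{2\pi mi\tau}\,d\tau$. An antiderivative of $e^{2\pi mi\tau}$ is $e^{2\pi mi\tau}/(2\pi mi)$, and since $m>0$ this factor decays exponentially as $\tau\to i\infty$ along a vertical ray (indeed $|e^{2\pi mi\tau}|=e^{-2\pi m\,\Im\tau}\to0$), so all boundary contributions at $i\infty$ vanish and only the endpoint $\tau=a$ survives. Differentiating $(X-\tau)^{k-2}$ brings down a factor $-(k-2)$ and lowers the exponent, so one integration by parts gives
$$I=-\dfrac{(X-a)^{k-2}e^{2\pi mia}}{2\pi mi}+\dfrac{k-2}{2\pi mi}\int_a^{i\infty}(X-\tau)^{k-3}e^{2\pi mi\tau}\,d\tau\;.$$

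Iterating this $k-1$ times (the last step integrating $(X-\tau)^0=1$), the boundary terms accumulate into
$$I=-e^{2\pi mia}\sum_{j=0}^{k-2}\dfrac{(k-2)!}{(k-2-j)!}\dfrac{(X-a)^{k-2-j}}{(2\pi mi)^{j+1}}\;,$$
where the coefficient $(k-2)!/(k-2-j)!$ counts the falling factorial produced by differentiating the polynomial $j$ times. Factoring out $(k-2)!/(2\pi mi)^{k-1}$ and reindexing by $n=k-2-j$ turns the sum into $\sum_{0\le n\le k-2}(2\pi mi(X-a))^{n}/n!$, which is exactly $R_{k-2}(2\pi mi(X-a))$. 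This yields the stated identity.

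Alternatively — and this is perhaps cleaner to write — one can shift by $\tau\mapsto\tau+a$ to reduce to $a=0$, then recognize the assertion as a special case of the elementary formula $\int_0^\infty t^{k-2}e^{-ct}\,dt/(k-2)!$-type expansions; but since the claim is explicit and finite, the straightforward induction on $k$ is the most transparent route: the base case $k=2$ is the trivial computation $\int_a^{i\infty}e^{2\pi mi\tau}\,d\tau=-e^{2\pi mia}/(2\pi mi)$, and the inductive step is precisely the single integration by parts displayed above combined with the induction hypothesis applied to $(X-\tau)^{k-3}$.

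The only point requiring care — the "main obstacle," though it is minor — is justifying that the integral converges and that the boundary term at $i\infty$ vanishes at every stage: here one uses that the path of integration ends at $i\infty$ along a ray on which $\Im\tau\to+\infty$, so each intermediate integrand $(X-\tau)^{k-2-j}e^{2\pi mi\tau}$ is dominated by a polynomial times $e^{-2\pi m\,\Im\tau}$ and the improper integral is absolutely convergent. Everything else is routine bookkeeping of the falling-factorial coefficients, and the factor $(-1)$ in front comes from the single surviving endpoint being the lower limit $a$ rather than the upper limit.
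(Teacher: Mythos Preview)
Your proof is correct; the paper itself offers no proof of this lemma, merely labeling it ``immediate,'' and your repeated integration by parts is precisely the computation one is meant to supply. The bookkeeping of the falling factorials and the reindexing $n=k-2-j$ are accurate, and your remark about the vanishing of the boundary terms at $i\infty$ (polynomial growth killed by $e^{-2\pi m\,\Im\tau}$) is the only point that needed mention.
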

  
We consider several cases. Keep in mind that in all the formulas that we
use for computing Petersson products the endpoints of integration are either
cusps or points in $\H$ with reasonably large imaginary part (at least
$\sqrt{3}/2$).

\begin{enumerate}
\item If $a\in\H$ and $b=i\infty$ (or the reverse), we write as usual
  $f_j(\tau)=\sum_{n\ge0}a_{\ga}(n)q^{\al(c)+n/w(c)}$ (where
  $c=\ga_j(i\infty)$), so that
  $$\int_a^b(X-\tau)^{k-2}f_j(\tau)\,d\tau=\sum_{n\ge0}a_{\ga}(n)\int_a^{i\infty}(X-\tau)^{k-2}e^{2\pi i(\al(c)+n/w(c))\tau}\,d\tau\;,$$
  and the inner integral is given by the lemma. The dominant term in the
  resulting series is $e^{2\pi (\al(c)+n/w(c))ia}$, so the convergence will be
  in $e^{-2\pi\Im(a)n/w(c)}$.
\item If $a\in\H$ and $b$ is a cusp (or the reverse), we choose $\ga\in\G$
  such that $b=\ga(i\infty)$, make the change of variable $\tau=\ga(\tau')$,
  and we are reduced to (1) with $f_{\ga(j)}$ instead of $f_j$.
\item If $a$ and $b$ are in $\H$ we simply write
  $\int_a^b=\int_a^{i\infty}-\int_b^{i\infty}$ and use (1).
\item If $a=0$ and $b=i\infty$ (or the reverse), we write the integral as
  $\int_0^{it_0}+\int_{it_0}^{i\infty}$. The second integral is treated as in
  (1), so with convergence in $e^{-2\pi t_0n/w(c)}$. In the first integral
  we make the change of variable $\tau\mapsto S(\tau)=-1/\tau$, and we
  again treat the resulting integral as in (1), with convergence in
  $e^{-2\pi(1/t_0)n/w(S(c))}$, where $w(S(c))$ is the width of the cusp
  $S(c)=\ga_j(S(i\infty))$. To optimize the speed, we thus choose
  $t_0=(w(c)/w(S(c)))^{1/2}$, so that the convergence of both integrals will be
  in $e^{-2\pi n/(w(c)w(S(c)))^{1/2}}$.
\item Finally, if $a$ and $b$ are both cusps, we use the well-known
  \emph{Manin decomposition} of a modular symbol as a sum of Manin symbols.
  More precisely, we proceed as follows. Write $a=A/C$ and $b=B/D$ with
  $\gcd(A,C)=\gcd(B,D)=1$. Then if $AD-BC=1$ we set $\ga=\psmm{A}{B}{C}{D}$,
  and using Lemma \ref{lempab} we transform our integral into an integral from
  $0$ to $i\infty$, so we apply (4) (similarly if $AD-BC=-1$).
  Otherwise, setting $\Delta=AD-BC$ and using $u$ and $v$ such that $uA+vC=1$,
  we write
  $$\begin{pmatrix}A&B\\C&D\end{pmatrix}=\begin{pmatrix}A&-v\\C&u\end{pmatrix}\begin{pmatrix}1&uB+vD\\0&\Delta\end{pmatrix}=\ga\begin{pmatrix}1&B'\\0&\Delta\end{pmatrix}$$
  for $B'=uB+vD$, where $\ga\in\G$. Let $(p_j/q_j)_{-1\le j\le m}$ be the
  convergents of the regular continued fraction expansion of $B'/\Delta$ with
  $p_{-1}/q_{-1}=1/0$ and $p_m/q_m=B'/\Delta$, and let $M_j$ be the matrix
  $$M_j=\begin{pmatrix}(-1)^{j-1}p_j&p_{j-1}\\(-1)^{j-1}q_j&q_{j-1}\end{pmatrix}\in\G\;.$$
  It is then immediate to show that
  $$P(A/C,B/D,F)(X)=\sum_{0\le j\le m}P(0,i\infty,F|_k(\ga M_j))|_{2-k}(\ga M_j)^{-1}(X)\;,$$
  so once again we can apply (4).
\end{enumerate}

Note that in Theorem \ref{petnoncusp} we need to use (1), (3), and (4),
while in Corollary \ref{corhab} we need to use (4) and (5), and in (5) we
have $(a,b)=(-1,1)$ so the Manin decomposition consists here simply in writing
$\int_{-1}^1=\int_{-1}^0+\int_0^1$, both integrals being then sent to
integrals from $0$ to $i\infty$ by suitable $\ga\in\G$.

\smallskip

In practice, the computation of these integrals forms only a very small
part of the computation time. Almost all of the time is spent in computing
the Fourier expansions at infinity of $f|_k\ga_j$, for instance using
products of two Eisenstein series as we do in this package. Note that there
is of course no need to \emph{rationalize} the expansions, and to compute all
these expansions at once we use the specific choice of the $\ga_j$ explained
in Section \ref{gajall}.

\section{Petersson Products: The Method of Nelson and Collins}

\vspace{1mm}\noindent

\subsection{The Basic Formula}

Recall that the completed zeta function $\Lambda(s)$ defined by
$\Lambda(s)=\pi^{-s/2}\G(s/2)\z(s)$ satisfies $\Lambda(1-s)=\Lambda(s)$.
Nelson's method, completed by Collins, is based on the following proposition,
essentially due to Rankin:

\begin{proposition} Let $F(\tau)$ be a bounded measurable function on $\H$
  invariant by the modular group $\G$, and such that for some fixed $\al>0$
  we have $F(x+iy)=O(y^{-\al})$ for almost all $\tau=x+iy$ with $y\ge1$.
  Denote by $a(0;F)(y)$ the constant term of the Fourier expansion of $F(\tau)$
  and by $\M(a(0;F))(s)=\int_0^\infty y^sa(0,F)(y)dy/y$ its Mellin
  transform. For any $\de>0$ we have
  $$\int_{\G\backslash\H}F(\tau)\,d\mu=\int_{\Re(s)=1+\de}(4s-2)\Lambda(2s)\M(a(0;F))(s-1)\,ds\;,$$
  where $d\mu=dxdy/y^2$ is the usual invariant hyperbolic measure.
\end{proposition}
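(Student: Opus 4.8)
The plan is to unfold the integral over the quotient $\G\backslash\H$ using the Eisenstein series and then apply contour integration, exactly in the spirit of Rankin's classical unfolding argument. First I would recall the (non-holomorphic) Eisenstein series $E(\tau,s)=\sum_{\ga\in\G_\infty\backslash\G}\Im(\ga\tau)^s$, which converges for $\Re(s)>1$, satisfies $E(\tau,s)\in L^1(\G\backslash\H,d\mu)$ issues aside is moderate growth, and whose constant term in its Fourier expansion is $y^s+\varphi(s)y^{1-s}$ with $\varphi(s)=\Lambda(2s-1)/\Lambda(2s)$. The key identity is the unfolding
\[
\int_{\G\backslash\H}F(\tau)E(\tau,s)\,d\mu=\int_{\G_\infty\backslash\H}F(\tau)y^s\,d\mu=\int_0^\infty a(0;F)(y)\,y^s\,\frac{dy}{y^2}=\M(a(0;F))(s-1)\;,
\]
valid for $\Re(s)>1+\al$ by the hypothesis $F(x+iy)=O(y^{-\al})$, which guarantees absolute convergence so that Fubini and the collapse of the $x$-integral to the constant term are legitimate.

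Next I would bring in the functional equation. Since $E(\tau,s)$ has a simple pole at $s=1$ with constant residue $3/\pi=1/\mathrm{vol}(\G\backslash\H)$ and otherwise continues meromorphically, and since $\Lambda(2s)E(\tau,s)=\Lambda(2s-1)E(\tau,1-s)$ (so $\Lambda(2s)E(\tau,s)$ is symmetric under $s\mapsto 1-s$), I would multiply the unfolding identity by $\Lambda(2s)$ and integrate $s$ along the vertical line $\Re(s)=1+\de$ for small $\de>0$, against a suitable kernel. Concretely, the factor $(4s-2)\Lambda(2s)$ in the statement is engineered so that the inverse Mellin transform reconstructs $F$ from the spectral side: one checks that $(4s-2)\Lambda(2s)E(\tau,s)$, integrated over $\Re(s)=1+\de$ and then shifted past the pole at $s=1$, picks up a residue proportional to $\int_{\G\backslash\H}F\,d\mu$ while the shifted integral vanishes by the functional-equation symmetry (the line $\Re(s)=1/2$ being the fixed line, the contribution on $\Re(s)<1/2$ cancels that on $\Re(s)>1/2$ after $s\mapsto 1-s$). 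This is the standard ``Zagier--Rankin--Selberg'' mechanism for regularizing the divergent integral of $F$ against a non-$L^1$ Eisenstein series.

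I would then assemble the pieces: starting from $\int_{\Re(s)=1+\de}(4s-2)\Lambda(2s)\M(a(0;F))(s-1)\,ds$, substitute the unfolding identity to rewrite the integrand as $(4s-2)\Lambda(2s)\int_{\G\backslash\H}F(\tau)E(\tau,s)\,d\mu$, interchange the two integrals (justified by absolute convergence on $\Re(s)=1+\de$, using the decay of $\Lambda(2s)$ in vertical strips and the $O(y^{-\al})$ bound on $F$), and evaluate $\int_{\Re(s)=1+\de}(4s-2)\Lambda(2s)E(\tau,s)\,ds$ by contour shift. The residue computation at $s=1$ gives precisely the constant making the right-hand side equal $\int_{\G\backslash\H}F(\tau)\,d\mu$, and the remaining contour piece vanishes by the functional equation. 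The main obstacle is the analytic bookkeeping in this last step: one must verify that the interchange of integrals is valid, that the contour shift does not cross any pole of $\M(a(0;F))(s-1)$ in the strip (which follows because $a(0;F)(y)$ is bounded and $O(y^{-\al})$, so its Mellin transform is holomorphic for $0<\Re(s-1)<\al$), and that the normalization constant $(4s-2)$ together with the residue $\Res_{s=1}E(\tau,s)=3/\pi$ produces exactly the measure integral rather than some scalar multiple. All of this is routine once the unfolding and the functional equation are in hand, but it is where care is needed.
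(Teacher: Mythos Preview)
Your proposal is correct and follows essentially the same Rankin-unfolding route as the paper. The one difference is the order of operations: the paper first proves $\frac{1}{2\pi i}\int_{\Re(s)=1+\de}(2s-1)\E(s)\,ds=\tfrac12$ as a pointwise identity in $\tau$ (residue theorem applied to $s\E(s)=s\Lambda(2s)E(s)$ on the box with vertical sides $\Re(s)=-\de$ and $\Re(s)=1+\de$, using the residues $\mp\tfrac12$ at $s=0,1$, then folding the left edge onto the right via $\E(1-s)=\E(s)$), and only afterwards multiplies by $2F(\tau)$ and integrates over $\G\backslash\H$; with this order no contour is ever shifted in an expression containing $\M(a(0;F))(s-1)$, so the analytic bookkeeping you flag as the main obstacle simply does not arise.
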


\begin{proof} Recall that the standard nonholomorphic Eisenstein series of
  weight $0$ is defined by
  $E(s)=\sum_{\ga\in\G_\infty\backslash\G}\Im(\ga\tau)^s$,
  and its completed function $\E(s)=\Lambda(2s)E(s)$ satisfies
  $\E(1-s)=\E(s)$ and has only two poles, which are simple, at $s=0$ and
  $s=1$ with residues $-1/2$ and $1/2$ respectively. Standard unfolding
  shows that
  $$\int_{\G\backslash\H}E(s)(\tau)F(\tau)\,d\mu=\int_0^\infty y^{s-2}\int_0^1F(x+iy)\,dx\,dy\;.$$
  The inner integral is equal to $a(0;F)(y)$ so that
  $$\int_{\G\backslash\H}E(s)(\tau)F(\tau)\,d\mu=\M(a(0;F))(s-1)\;.$$
  On the other hand, by the residue theorem if $C_{\de}$ is the infinite
  vertical contour whose vertical sides are $\Re(s)=-\de$ and $\Re(s)=1+\de$,
  by the residue theorem we first have
  $$\dfrac{1}{2\pi i}\int_{C_{\de}}s\E(s)\,ds=\Res_{s=0}s\E(s)+\Res_{s=1}s\E(s)=1/2\;,$$
  and on the other hand, since $\E$ decreases exponentially when
  $|\Im(s)|\to\infty$ and $\E(1-s)=\E(s)$, this integral is equal to
  $$\dfrac{1}{2\pi i}\int_{\Re(s)=1+\de}(2s-1)\E(s)\,ds\;.$$
  Multiplying the resulting identity by $2F(\tau)$ and integrating on
  $\G\backslash\H$ gives
  $$\dfrac{1}{2\pi i}\int_{\Re(s)=1+\de}(4s-2)\Lambda(2s)\int_{\G\backslash\H}E(s)F(\tau)\,ds\,d\mu=\int_{\G\backslash\H}F(\tau)\,d\mu\;,$$
  hence
  $$\int_{\G\backslash\H}F(\tau)\,d\mu=\dfrac{1}{2\pi i}\int_{\Re(s)=1+\de}(4s-2)\Lambda(2s)\M(a(0;F))(s-1)\,ds\;,$$
  proving the proposition.\fp\end{proof}
  
\begin{corollary} Let $G$ be a subgroup of finite index of $\G$, let
  $C(G)$ be a system of representatives of the cusps of $G$, and for each
  $c\in C(G)$ let $\ga_c\in\G$ such that $\ga_c(i\infty)=c$. If $F(\tau)$
  is a bounded measurable function invariant by $G$ we have
  $$\int_{G\backslash\H}F(\tau)\,d\mu=\dfrac{1}{2\pi i}\int_{\Re(s)=1+\de}(4s-2)\Lambda(2s)\sum_{c\in C}w(c)\M(a(0;F|\ga_c))(s-1)ds\;,$$
  where $w(c)$ is the width of the cusp $c$.\end{corollary}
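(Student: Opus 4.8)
The plan is to deduce the corollary from the proposition by the standard averaging trick. Since $F$ is only $G$-invariant, I would first replace it by the $\G$-invariant function
$$\tilde F(\tau)=\sum_{\ga\in G\backslash\G}F(\ga\tau)\;,$$
which is well defined because $F$ is $G$-invariant, is manifestly $\G$-invariant, is bounded (a finite sum of bounded functions) and measurable, and inherits the growth hypothesis $\tilde F(x+iy)=O(y^{-\al})$ needed by the proposition --- this last point being equivalent to the assumption (implicit in the corollary, and exactly the convergence condition in the Petersson application) that $F$ decays at every cusp of $G$, i.e.\ that each $F(\ga_c\tau)$ tends to $0$ as $\tau\to i\infty$. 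Standard unfolding along $G\backslash\H\to\G\backslash\H$ then gives $\int_{\G\backslash\H}\tilde F\,d\mu=\int_{G\backslash\H}F\,d\mu$, so it suffices to apply the proposition to $\tilde F$ and to identify the Mellin transform of its constant term in terms of the data attached to $F$.

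Next I would pick coset representatives for $G\backslash\G$ in the convenient shape already used in Section \ref{gajall}, namely $(\ga_cT^m)_{c\in C(G),\ 0\le m<w(c)}$; the argument given there uses nothing special about $\G_0(N)$. Indeed, for $\ga\in\G$ the cusp $\ga(i\infty)$ is $G$-equivalent to a unique $c\in C(G)$, so $\ga=\de\,\ga_cT^m$ with $\de\in G$ and $m\in\Z$, and $m$ may be reduced modulo $w(c)$ because $\ga_cT^{w(c)}\ga_c^{-1}\in G$ by definition of the width; a count of indices shows these cosets are all distinct and exhaust $G\backslash\G$. With this choice one has $\tilde F(x+iy)=\sum_{c\in C(G)}\sum_{0\le m<w(c)}(F|\ga_c)\bigl((x+m)+iy\bigr)$.

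It then remains to compute the constant term of $\tilde F$. Since $\ga_cT^{w(c)}\ga_c^{-1}\in G$, the function $F|\ga_c$ has period $w(c)$, so its constant Fourier coefficient is $a(0;F|\ga_c)(y)=w(c)^{-1}\int_0^{w(c)}(F|\ga_c)(x+iy)\,dx$; reassembling the inner sum over $m$ into a single integral over a full period yields
$$a(0;\tilde F)(y)=\int_0^1\tilde F(x+iy)\,dx=\sum_{c\in C(G)}\int_0^{w(c)}(F|\ga_c)(x+iy)\,dx=\sum_{c\in C(G)}w(c)\,a(0;F|\ga_c)(y)\;,$$
and hence, taking Mellin transforms (a finite sum, so the interchange is trivial), $\M(a(0;\tilde F))(s)=\sum_{c\in C(G)}w(c)\,\M(a(0;F|\ga_c))(s)$. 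Substituting this into the conclusion of the proposition applied to $\tilde F$, together with $\int_{\G\backslash\H}\tilde F\,d\mu=\int_{G\backslash\H}F\,d\mu$, gives the stated formula. The only points I expect to require care are the two normalizations just seen: verifying that $\tilde F$ genuinely satisfies the decay hypothesis of the proposition (it does, precisely because $F$ is assumed to decay at each cusp of $G$), and taking the constant Fourier coefficient $a(0;F|\ga_c)$ with respect to the period $w(c)$ rather than $1$, which is exactly what produces the weights $w(c)$ in the final formula.
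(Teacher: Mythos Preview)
Your proof is correct and follows exactly the same approach as the paper: form the $\G$-invariant average $\tilde F=\sum_{\ga\in G\backslash\G}F|\ga$, apply the proposition to it, and identify the constant term of $\tilde F$ cusp by cusp. The paper's one-line justification (``$a(0;F|\ga'_c)=a(0;F|\ga_c)$ whenever $\ga'_c(i\infty)=\ga_c(i\infty)$'') is precisely what your explicit choice of representatives $\ga_cT^m$ and the period-$w(c)$ computation make concrete; you have simply unpacked what the paper leaves implicit.
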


\begin{proof} Immediate by applying the proposition to
  $F_1=\sum_{\ga\in G\backslash\G}F|\ga$, noting that
  $\int_{\G\backslash\H}F_1(\tau)\,d\mu=\int_{G\backslash\H}F(\tau)\,d\mu$,
  and that $a(0;F|\ga'_c)=a(0;F|\ga_c)$ for any $\ga'_c$ such that
  $\ga'_c(i\infty)=\ga_c(i\infty)=c$.\fp\end{proof}

\subsection{Collins's Formula}

We are of course going to apply the above corollary to the function
$F(\tau)=f(\tau)\ov{g(\tau)}y^k$, with $y=\Im(\tau)$. Recall from
Section \ref{fourga} that we have expansions
$$f|_k\ga(\tau)=q^{\al(c)}\sum_{n\ge0}a_{\ga}(n)q^{n/w(c)}\text{\quad and\quad}  g|_k\ga(\tau)=q^{\al(c)}\sum_{n\ge0}b_{\ga}(n)q^{n/w(c)}$$
with the same $\al(c)$ and $w(c)$. It follows that the constant term
$a_{\ga}(0;F)$ is given by
$$a_{\ga}(0,F)=y^k\sum_{n\ge0}a_{\ga}(n)\ov{b_{\ga}(n)}e^{-4\pi y(\al(c)+n/w(c))}\;,$$
so that
$$\M(a_{\ga}(0;F))(s)=\dfrac{\G(s+k)}{(4\pi)^{s+k}}\sum_{n\ge0}\dfrac{a_{\ga}(n)\ov{b_{\ga}(n)}}{(\al(c)+n/w(c))^{s+k}}\;.$$
We must now be careful about convergence of the Petersson product.
When $k\ge1$, the necessary and sufficient condition is that at every cusp
either $f$ or $g$ vanishes, or equivalently that for every $\ga$
at least one of the forms $f|_k\ga$ and $g|_k\ga$ vanishes at infinity.
In these cases, if $\al(c)=0$ we have necessarily
$a_{\ga}(0)\ov{b_{\ga}(0)}=0$, which means that we omit the term $n=0$, while
if $\al(c)\ne0$ we must keep it.

However the Petersson product also converges without any condition on
$f$ and $g$ if $k=1/2$. In that case, if $\al(c)=n=0$ the contribution to
$a_{\ga}(0;F)$ is $a_{\ga}(0)\ov{b_{\ga}(0)}y^{1/2}$, and although the Mellin
transform is divergent, we will need to take a limit as we will see below.

We deduce from the above corollary and the explicit expression of
$\Lambda(2s)$ the following temporary result:

\begin{proposition} Keep the above assumptions and notation. We have
  \begin{align*}<f,g>_{\G_0(N)}&=\dfrac{1}{[\G:\G_0(N)]}\sum_{c\in C(G)}w(c)\sum_{n\ge0}\dfrac{a_{\ga_c}(n)\ov{b_{\ga_c}(n)}}{(4\pi(\al(c)+n/w(c)))^{k-1}}\cdot\\
    &\phantom{=}\cdot\dfrac{1}{2\pi i}\int_{\Re(s)=1+\de}(4s-2)\dfrac{\G(s)\G(k+s-1)\zeta(2s)}{(4\pi^2(\al(c)+n/w(c)))^s}\,ds\;.
  \end{align*}
\end{proposition}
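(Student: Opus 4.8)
The plan is to apply the preceding corollary with $G=\G_0(N)$ to the weight-zero function $F(\tau)=f(\tau)\ov{g(\tau)}\,y^k$, where $y=\Im(\tau)$. First I would record that $F$ meets the hypotheses of that corollary. It is $\G_0(N)$-invariant: for $\ga=\psmm{a}{b}{c}{d}\in\G_0(N)$ the factors $f(\ga\tau)=\chi(d)(c\tau+d)^kf(\tau)$, $\ov{g(\ga\tau)}=\ov{\chi(d)}\,\ov{(c\tau+d)^k}\,\ov{g(\tau)}$ and $\Im(\ga\tau)^k=y^k|c\tau+d|^{-2k}$ combine so that the automorphy factors, and the character since $|\chi(d)|=1$, all cancel. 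It is bounded and measurable: by the convergence hypothesis at each cusp one of $f$, $g$ vanishes, hence for every $\ga_c$ one of $f|_k\ga_c$, $g|_k\ga_c$ tends to $0$ exponentially as $\tau\to i\infty$, which in the regular case $\al(c)=0$ forces $a_{\ga_c}(0)\ov{b_{\ga_c}(0)}=0$; thus $F|\ga_c$ decays exponentially at $i\infty$ for every $c$ and $F$ is bounded on $\H$. With the normalization $<f,g>_{\G_0(N)}=[\G:\G_0(N)]^{-1}\int_{\G_0(N)\backslash\H}F(\tau)\,d\mu$ (the one already in force in Corollary~\ref{corhab}), the corollary turns into an identity for $<f,g>_{\G_0(N)}$.

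Next I would substitute the Mellin transform recorded just before the statement. Since $F|\ga_c=(f|_k\ga_c)\ov{(g|_k\ga_c)}\,y^k$, multiplying the two $q$-expansions and keeping the diagonal terms $m=n$ (the off-diagonal ones average to $0$ over a period in $x$) gives the constant term $a_{\ga_c}(0;F)(y)=y^k\sum_{n\ge0}a_{\ga_c}(n)\ov{b_{\ga_c}(n)}\,e^{-4\pi y(\al(c)+n/w(c))}$; a termwise Mellin transform, using $\int_0^\infty y^{\sigma}e^{-Ay}\,dy/y=\G(\sigma)A^{-\sigma}$, then yields, writing $X_{c,n}=\al(c)+n/w(c)$, the identity $\M(a_{\ga_c}(0;F))(s-1)=\G(s+k-1)(4\pi)^{1-s-k}\sum_{n\ge0}a_{\ga_c}(n)\ov{b_{\ga_c}(n)}\,X_{c,n}^{-(s+k-1)}$. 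Now insert $\Lambda(2s)=\pi^{-s}\G(s)\z(2s)$ and collect the elementary powers via the identity $\pi^{-s}(4\pi X)^{1-s-k}=(4\pi X)^{1-k}(4\pi^2X)^{-s}$; the contribution of the pair $(c,n)$ to the corollary then becomes $w(c)\,\dfrac{a_{\ga_c}(n)\ov{b_{\ga_c}(n)}}{(4\pi X_{c,n})^{k-1}}\cdot\dfrac{1}{2\pi i}\int_{\Re(s)=1+\de}(4s-2)\dfrac{\G(s)\G(s+k-1)\z(2s)}{(4\pi^2X_{c,n})^s}\,ds$. Summing over $c$ and $n$ and dividing by $[\G:\G_0(N)]$ is exactly the stated formula.

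The one point that needs care — and that I expect to be the main obstacle — is the interchange of the sums $\sum_{c}w(c)\sum_{n\ge0}$ with the contour integral, equivalently the meaning of $\M(a_{\ga_c}(0;F))(s-1)$ on the line $\Re(s)=1+\de$: the Dirichlet-type series written above converges only for $\Re(s)$ large, and on $\Re(s)=1+\de$ the symbol $\M(a_{\ga_c}(0;F))(s-1)$ must be read as the meromorphic continuation supplied by the unfolding identity $\int_{\G\backslash\H}E(s)F\,d\mu=\M(a(0;F))(s-1)$ established above. The clean way to legitimize the rearrangement when $k\ge2$ is to first move the $s$-contour to some $\Re(s)=k+\de'$: no pole is crossed, since $\Lambda(2s)\M(a(0;F))(s-1)$ is holomorphic for $\Re(s)>1$ and decays exponentially on vertical lines by Stirling, and on that new line the $n$-series converges absolutely so the interchange is licit; one then shifts each resulting term back to $\Re(s)=1+\de$, again crossing no pole. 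For $k=1$ the $n$-series already converges on $\Re(s)=1+\de$, so no shift is needed. Since the statement is explicitly a \emph{temporary} one, the genuinely delicate residual matters — moving the contour back across the poles of $\Lambda(2s)\M(a(0;F))(s-1)$, and the limiting argument required when the term $\al(c)=n=0$ occurs, i.e.\ when $k=1/2$ — are left to the analysis that follows.
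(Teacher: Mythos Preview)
Your argument is correct and is exactly the route the paper takes: it applies the preceding corollary to $F=f\ov g\,y^k$, plugs in the Mellin transform already computed, and uses $\Lambda(2s)=\pi^{-s}\Gamma(s)\zeta(2s)$ to obtain the stated formula (the paper simply says ``we deduce from the above corollary and the explicit expression of $\Lambda(2s)$'' without further detail). Your verification of the hypotheses and your justification of the $n$--$s$ interchange via contour shifting are in fact more careful than what the paper records, which treats this proposition as a formal intermediate step.
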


There are now two ways to continue, and we consider both.

First, we write $\zeta(2s)=\sum_{m\ge1}m^{-2s}$, so that the integral is equal
to the sum from $m=1$ to $\infty$ of the inverse Mellin transform at
$x=4\pi^2m^2(\al(c)+n/w(c))$ of the function $(4s-2)\G(s)\G(k+s-1)$.
Since this inverse Mellin transform is equal to
$$4x^{(k-1)/2}(2x^{1/2}K_{k-2}(2x^{1/2})-K_{k-1}(2x^{1/2}))\;,$$
we obtain our final theorem, due to D.~Collins, although in a slightly
different form:

\begin{theorem}\label{thmnelcol} Let $f$ and $g$ be in $M_k(\G_0(N),\chi)$
  such that either $fg$ vanishes at all cusps or $k=1/2$, and keep all the
  above notation. We have
  \begin{align*}<f,g>_{\G_0(N)}&=\dfrac{4(8\pi)^{-(k-1)}}{[\G:\G_0(N)]}\sum_{c\in C(G)}w(c)\cdot\\
    &\phantom{=}\cdot\sum_{n\ge0}\dfrac{a_{\ga_c}(n)\ov{b_{\ga_c}(n)}}{((\al(c)+n/w(c)))^{k-1}}W_k(4\pi(\al(c)+n/w(c))^{1/2})\;,\end{align*}
  where $W_k(x)=\sum_{m\ge1}(mx)^{k-1}(mxK_{k-2}(mx)-K_{k-1}(mx))$.
  In the special case $k=1/2$, $\al(c)=0$, and $n=0$, the term
  $(n/w(c))^{1/2}W_{1/2}(4\pi (n/w(c))^{1/2}$ is to be interpreted as
  its limit as $n\to0$, in other words as $1/(4(2\pi)^{1/2})$.
\end{theorem}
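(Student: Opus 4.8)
The plan is to start from the preceding Proposition, which already reduces $\langle f,g\rangle_{\G_0(N)}$ to a sum over cusps $c$ and over $n\ge0$ of $a_{\ga_c}(n)\ov{b_{\ga_c}(n)}$ times a weight factor and a fixed contour integral. Concretely, writing $u=u(c,n)=\al(c)+n/w(c)$, the only nontrivial analytic object left is
$$J(u)=\dfrac{1}{2\pi i}\int_{\Re(s)=1+\de}(4s-2)\dfrac{\G(s)\G(k+s-1)\zeta(2s)}{(4\pi^2 u)^s}\,ds\;,$$
and the task is just to evaluate $J(u)$ in closed form. First I would expand $\zeta(2s)=\sum_{m\ge1}m^{-2s}$, valid since $\Re(2s)=2+2\de>1$, and interchange the sum and the integral (justified by absolute convergence on the line, using the exponential decay of $\G(s)\G(k+s-1)$ in vertical strips together with $\sum_m m^{-2-2\de}<\infty$). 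That turns $J(u)$ into $\sum_{m\ge1}$ of the inverse Mellin transform of $(4s-2)\G(s)\G(k+s-1)$ evaluated at the point $x=x(m)=4\pi^2 m^2 u$.

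Next I would compute that inverse Mellin transform. The building block is the Mellin pair for the $K$-Bessel function: $\int_0^\infty t^{s-1}\cdot 2t^{\nu}K_{\nu}(2t)\,dt$-type identities give that $\G(s)\G(s+\nu)$ is, up to normalization, the Mellin transform of $2x^{\nu/2}K_{\nu}(2\sqrt{x})$. Applying this with $\nu=k-1$ shows the inverse Mellin transform of $\G(s)\G(k+s-1)$ at $x$ is $2x^{(k-1)/2}K_{k-1}(2\sqrt{x})$. The extra factor $4s-2 = 4s-2$ is handled by the standard rule that multiplying a Mellin transform by $s$ corresponds to applying $-x\,d/dx$ to the inverse transform; using $\frac{d}{dx}\big(x^{\nu/2}K_{\nu}(2\sqrt{x})\big)=-x^{(\nu-1)/2}K_{\nu-1}(2\sqrt{x})$ (a consequence of the recurrence $\frac{d}{dz}(z^{-\nu}K_\nu(z))=-z^{-\nu}K_{\nu+1}(z)$ after the substitution $z=2\sqrt{x}$), one gets that the inverse Mellin transform of $(4s-2)\G(s)\G(k+s-1)$ at $x$ equals
$$4x^{(k-1)/2}\bigl(2x^{1/2}K_{k-2}(2x^{1/2})-K_{k-1}(2x^{1/2})\bigr)\;,$$
exactly the expression quoted before the theorem. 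I would then substitute $x=4\pi^2 m^2 u$, so $2x^{1/2}=4\pi m\sqrt{u}$ and $x^{(k-1)/2}=(2\pi m\sqrt u)^{k-1}=(4\pi^2 u)^{(k-1)/2}\,m^{k-1}$, collect the $m$-sum into $W_k(4\pi\sqrt u)=\sum_{m\ge1}(4\pi m\sqrt u)^{k-1}\bigl(4\pi m\sqrt u\,K_{k-2}(4\pi m\sqrt u)-K_{k-1}(4\pi m\sqrt u)\bigr)$, and feed this back into the Proposition. Tracking the remaining constants — the $(4\pi u)^{-(k-1)}$ already present, the $4(4\pi^2 u)^{(k-1)/2}$ coming out of the transform, and the power of $2$ — collapses to the stated prefactor $4(8\pi)^{-(k-1)}/[\G:\G_0(N)]$ and to the stated summand $a_{\ga_c}(n)\ov{b_{\ga_c}(n)}\,u^{-(k-1)}W_k(4\pi\sqrt u)$.

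The two places needing genuine care, rather than bookkeeping, are: (i) the interchange of $\sum_m$ with the contour integral and the convergence of the resulting $m$-series and $n$-series — here I would note that $K_\nu(z)\sim\sqrt{\pi/(2z)}e^{-z}$ forces geometric decay in $m$ and (together with the polynomial growth of $a_{\ga_c}(n),b_{\ga_c}(n)$) in $n$ whenever $u>0$, which is the case unless $\al(c)=0$ and $n=0$; and (ii) the degenerate term $\al(c)=0$, $n=0$, where $u=0$ and both $u^{-(k-1)}$ and $W_k$ individually blow up. For integral $k\ge1$ the hypothesis that $fg$ vanishes at every cusp kills this term ($a_{\ga_c}(0)\ov{b_{\ga_c}(0)}=0$ when $\al(c)=0$), so it may simply be dropped. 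For $k=1/2$ it must instead be evaluated as a limit: as $u\to0^+$, $u^{-(k-1)}=u^{1/2}$ and each Bessel term behaves like $K_{\nu}(z)\sim\tfrac12\G(|\nu|)(2/z)^{|\nu|}$ for small $z$, so I would do a short asymptotic expansion of $u^{1/2}W_{1/2}(4\pi\sqrt u)$ showing the $\zeta$-like $m$-sum reorganizes into a finite value, and check it equals $1/(4(2\pi)^{1/2})$; that local computation is the main obstacle, the rest being the standard Mellin/Bessel dictionary applied carefully.
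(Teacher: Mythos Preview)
Your proposal is correct and follows essentially the same route as the paper: starting from the preceding Proposition, expanding $\zeta(2s)=\sum_{m\ge1}m^{-2s}$, identifying the inverse Mellin transform of $(4s-2)\Gamma(s)\Gamma(k+s-1)$ as $4x^{(k-1)/2}(2x^{1/2}K_{k-2}(2x^{1/2})-K_{k-1}(2x^{1/2}))$, and then substituting $x=4\pi^2m^2u$ to produce $W_k$. You in fact supply more detail than the paper does on deriving the Bessel inverse Mellin transform and on the convergence and $k=1/2$ limit issues, but the underlying argument is identical.
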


We will study the function $W_k(x)$ and its implementation below.

But there is another way to continue. Assume for simplicity that $\al(c)=0$
(so that the sum starts at $n=1$). We can write
$$\zeta(2s)\sum_{n\ge1}\dfrac{a_{\ga_c}(n)\ov{b_{\ga_c}(n)}}{n^{s+k-1}}=\sum_{N\ge1}\dfrac{A_{\ga_c}(N)}{N^{s+k-1}}\;,$$
with $$A_{\ga_c}(N)=\sum_{m^2\mid N}m^{2(k-1)}a_{\ga_c}(N/m^2)\ov{b_{\ga_c}(N/m^2)}\;.$$
Once $a_{\ga_c}(n)$ and $b_{\ga_c}(n)$ computed, the computation of
$A_{\ga_c}(N)$ takes negligible time. The advantage is that $\zeta(2s)$ has
disappeared, and we now obtain a formula involving only the term $m=1$
in the definition of $W_k$, i.e., the function
$V_k(x)=x^{k-1}(xK_{k-2}(x)-K_{k-1}(x))$.

When $\al(c)\ne0$ a similar but more complicated formula can easily be
written. Since anyway as we will see the function $W_k(x)$ can be computed
essentially as fast as the function $V_k(x)$, we have not used this other
method.

\subsection{Computation of the Function $W_k(x)$}

First note that $W_k(x)$ is exponentially decreasing at infinity, more
precisely thanks to the corresponding result for the $K$-Bessel function
it is immediate to show that as $x\to\infty$ we have
$$W_k(x)\sim\sqrt{\pi/2}x^{k-1/2}e^{-x}\;.$$

To compute $W_k(x)$ we introduce the simpler function
$U_k(x)=\sum_{m\ge1}(mx)^kK_k(mx)$, and thanks to the recursions
for the $K$-Bessel functions we have $W_k(x)=U_k(x)-(2k-1)U_{k-1}(x)$, so
we must compute $U_k(x)$. We distinguish between $k$ half-integral and
$k$ integral. For $k$ half-integral we have the following easy proposition
which comes from the fact that $K_k$ is an elementary function:

\begin{proposition}\label{propsk} Define polynomials $P_k(x)$ by $P_0(x)=1$
  and the recursion $P_{k+1}(x)=x((k+1)P_k(x)-(x-1)P'_k(x))$ for $x\ge0$, and
  set $S_k(x)=P_k(x)/(x-1)^{k+1}$. For all $k\ge0$ integral we have
  $$U_{k+1/2}(x)=\sqrt{\dfrac{\pi}{2}}\sum_{0\le j\le k}\dfrac{x^{k-j}(k+j)!}{j!(k-j)!2^j}S_{k-j}(e^x)\;.$$
\end{proposition}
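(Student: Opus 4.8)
\textbf{Proof plan for Proposition \ref{propsk}.}

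The plan is to start from the closed form of the half-integral $K$-Bessel function, namely $K_{k+1/2}(x)=\sqrt{\pi/(2x)}\,e^{-x}\sum_{j=0}^{k}\frac{(k+j)!}{j!(k-j)!(2x)^j}$, so that
$$(mx)^{k+1/2}K_{k+1/2}(mx)=\sqrt{\tfrac{\pi}{2}}\,e^{-mx}\sum_{j=0}^{k}\frac{(k+j)!}{j!(k-j)!2^j}(mx)^{k-j}\;.$$
Summing over $m\ge1$ and interchanging the two (absolutely convergent, since $mx>0$) sums gives
$$U_{k+1/2}(x)=\sqrt{\tfrac{\pi}{2}}\sum_{j=0}^{k}\frac{(k+j)!}{j!(k-j)!2^j}x^{k-j}\sum_{m\ge1}m^{k-j}e^{-mx}\;,$$
so everything reduces to identifying the inner sum $\sum_{m\ge1}m^{\ell}e^{-mx}$ with $S_{\ell}(e^x)$ for each $\ell=k-j\ge0$.

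The second step is therefore to prove the auxiliary identity $\sum_{m\ge1}m^{\ell}t^{m}=P_{\ell}(t)/(1-t)^{\ell+1}$ for $|t|<1$ and all $\ell\ge0$, with the $P_\ell$ defined by the stated recursion; applying it at $t=e^{-x}$ (and observing that $P_\ell(t)/(1-t)^{\ell+1}=P_\ell(e^{-x})/(1-e^{-x})^{\ell+1}$ equals $e^{(\ell+1)x}P_\ell(e^{-x})/(e^x-1)^{\ell+1}$, which one checks matches $S_\ell(e^x)=P_\ell(e^x)/(e^x-1)^{\ell+1}$ once the polynomials are seen to be palindromic of the right degree, i.e. $t^{\ell}P_\ell(1/t)=P_\ell(t)$ — alternatively one simply redefines $S_\ell$ directly as $\sum_{m\ge1}m^\ell e^{-mx}$ and verifies it has the claimed rational form). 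This identity is proved by induction on $\ell$: the base case $\ell=0$ is the geometric series $\sum_{m\ge1}t^m=t/(1-t)$, giving $P_0=1$ after the trivial renormalization; for the inductive step one applies the operator $t\frac{d}{dt}$, which sends $m^\ell t^m$ to $m^{\ell+1}t^m$, to the identity for $\ell$, and computes $t\frac{d}{dt}\bigl(P_\ell(t)(1-t)^{-(\ell+1)}\bigr)=\bigl(t P_\ell'(t)(1-t)+(\ell+1)tP_\ell(t)\bigr)(1-t)^{-(\ell+2)}$, which exhibits exactly the numerator recursion $P_{\ell+1}(t)=t\bigl((\ell+1)P_\ell(t)+(1-t)P_\ell'(t)\bigr)$; matching this against the stated recursion $P_{k+1}(x)=x((k+1)P_k(x)-(x-1)P_k'(x))$ is the same thing since $-(x-1)=(1-x)$.

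Combining the two steps: substitute $\sum_{m\ge1}m^{k-j}e^{-mx}=S_{k-j}(e^x)$ into the displayed expression for $U_{k+1/2}(x)$, which gives precisely
$$U_{k+1/2}(x)=\sqrt{\tfrac{\pi}{2}}\sum_{0\le j\le k}\frac{x^{k-j}(k+j)!}{j!(k-j)!2^j}S_{k-j}(e^x)\;,$$
as claimed. The only genuinely delicate points are bookkeeping ones rather than conceptual: first, reconciling the two natural normalizations of $P_\ell$ (the one coming from $\sum m^\ell t^m$ versus the paper's, where $S_k(x)=P_k(x)/(x-1)^{k+1}$ with argument $x=e^x$ rather than $t=e^{-x}$) — this is handled by the palindromy relation $t^\ell P_\ell(1/t)=P_\ell(t)$, itself an easy induction from the same recursion; and second, justifying the interchange of the $m$- and $j$-summations, which is immediate from absolute convergence since for $x>0$ every term $(mx)^{k-j}e^{-mx}$ is positive and the total is finite. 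I expect the recursion-matching (getting the signs and the $x$ versus $1-x$ in $P_k'$ to line up) to be the main place where care is needed, but it is entirely mechanical.
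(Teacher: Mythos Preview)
Your approach is exactly the one the paper intends: it states the proposition as ``easy'' and says it ``comes from the fact that $K_k$ is an elementary function,'' giving no further details. Expanding $K_{k+1/2}$ by its finite sum, summing over $m$, and identifying $\sum_{m\ge1}m^{\ell}e^{-mx}$ with $S_\ell(e^x)$ is the right plan.

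That said, two of your bookkeeping claims are off. First, the palindromy relation you state, $t^{\ell}P_\ell(1/t)=P_\ell(t)$, is false (already for $\ell=1$: $t\cdot P_1(1/t)=1\ne t$); the correct identity for $\ell\ge1$ is $t^{\ell+1}P_\ell(1/t)=P_\ell(t)$, and it fails for $\ell=0$. Second, the intermediate assertion $\sum_{m\ge1}m^{\ell}t^m=P_\ell(t)/(1-t)^{\ell+1}$ is wrong at $\ell=0$ (the left side is $t/(1-t)$, not $1/(1-t)$), so ``trivial renormalization'' is hiding a genuine mismatch.

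Both problems disappear if you take the alternative route you already mention: prove directly that $\sum_{m\ge1}m^{\ell}e^{-mx}=P_\ell(e^x)/(e^x-1)^{\ell+1}$ by induction on $\ell$. The base case $\ell=0$ is $\sum_{m\ge1}e^{-mx}=1/(e^x-1)$, matching $P_0=1$ on the nose. For the step, apply $-d/dx$; writing $u=e^x$ one gets
\[
-\dfrac{d}{dx}\,\dfrac{P_\ell(u)}{(u-1)^{\ell+1}}=\dfrac{u\bigl((\ell+1)P_\ell(u)-(u-1)P_\ell'(u)\bigr)}{(u-1)^{\ell+2}}\;,
\]
which is precisely the stated recursion for $P_{\ell+1}$, with no palindromy needed and no sign ambiguity. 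This is the argument to write up.
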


This makes the computation of $U_{k+1/2}(x)$ essentially trivial.

\smallskip

We now consider the slightly more difficult problem of computing
$U_k(x)$ when $k$ is integral. Since $K_k(mx)$ tends exponentially fast to
$0$ we could of course simply sum $(mx)^kK_k(mx)$ until the terms become
negligible with respect to the desired accuracy, using the {\tt Pari/GP}
built-in function {\tt besselk} for computing $K$-Bessel functions. But there
is a way which is at least an order of magnitude faster. First note the
following lemma, which comes directly from the integral representation of the
$K$-Bessel function:

\begin{lemma} We have
  $$U_k(x)=\dfrac{x^k}{2}\int_{-\infty}^\infty S_k(e^{x\cosh(t)})\cosh(kt)\,dt\;,$$
  where the functions $S_k$ are as above.
\end{lemma}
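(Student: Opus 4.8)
The plan is to combine two ingredients: an explicit closed form for the generating function $S_k$ as a Dirichlet-type series, and the classical integral representation of the $K$-Bessel function.

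First I would show that for every integer $k\ge0$ and every $u>0$,
$$S_k(e^u)=\sum_{m\ge1}m^ke^{-mu}\;.$$
For $k=0$ this is just the geometric series $1/(e^u-1)=\sum_{m\ge1}e^{-mu}$, which agrees with $S_0(x)=P_0(x)/(x-1)=1/(x-1)$. For the inductive step, differentiating the defining relation $S_k(x)=P_k(x)/(x-1)^{k+1}$ and inserting the recursion $P_{k+1}(x)=x((k+1)P_k(x)-(x-1)P'_k(x))$ gives the clean polynomial identity $S_{k+1}(x)=-xS'_k(x)$. On the other hand $\frac{d}{du}\sum_{m\ge1}m^ke^{-mu}=-\sum_{m\ge1}m^{k+1}e^{-mu}$, and since $\frac{d}{du}S_k(e^u)=e^uS'_k(e^u)$ these two recursions match; hence the claim follows by induction on $k$.

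Next, I would recall the standard integral representation $K_k(z)=\int_0^\infty e^{-z\cosh t}\cosh(kt)\,dt=\tfrac12\int_{-\infty}^\infty e^{-z\cosh t}\cosh(kt)\,dt$, valid for $\Re(z)>0$, the second equality holding because the integrand is even in $t$. Taking $z=mx$ with $x>0$, multiplying by $(mx)^k$ and summing over $m\ge1$ gives
$$U_k(x)=\sum_{m\ge1}(mx)^kK_k(mx)=\frac{x^k}{2}\sum_{m\ge1}m^k\int_{-\infty}^\infty e^{-mx\cosh t}\cosh(kt)\,dt\;.$$
The summand $m^ke^{-mx\cosh t}\cosh(kt)$ is nonnegative, so by Tonelli's theorem the sum and integral may be interchanged provided the result is finite, and finiteness is exactly the convergence of $U_k(x)$, which holds because $m^kK_k(mx)$ decays exponentially in $m$ (from $K_k(z)\sim\sqrt{\pi/(2z)}\,e^{-z}$ as $z\to\infty$). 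After the interchange, the first step lets me recognise $\sum_{m\ge1}m^ke^{-mx\cosh t}=S_k(e^{x\cosh t})$ — legitimate since $x\cosh t\ge x>0$ — and this yields
$$U_k(x)=\frac{x^k}{2}\int_{-\infty}^\infty S_k(e^{x\cosh t})\cosh(kt)\,dt\;,$$
as claimed.

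The only point needing genuine care is the first step, identifying $S_k(e^u)$ with the series $\sum_{m\ge1}m^ke^{-mu}$; this is a short induction once one observes that the $P_k$-recursion is equivalent to $S_{k+1}(x)=-xS'_k(x)$. The interchange of summation and integration is then routine (positivity of the integrand together with the exponential decay of $K_k$), and the integral representation of $K_k$ is classical, so I expect no serious obstacle beyond bookkeeping.
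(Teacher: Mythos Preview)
Your proof is correct and follows exactly the route the paper intends: the paper simply states that the lemma ``comes directly from the integral representation of the $K$-Bessel function,'' and you have spelled out precisely this argument, including the identification $S_k(e^u)=\sum_{m\ge1}m^ke^{-mu}$ (which the paper leaves implicit, it being the content underlying Proposition~\ref{propsk} as well). The verification that the $P_k$-recursion is equivalent to $S_{k+1}(x)=-xS_k'(x)$ is exactly right, and the interchange of sum and integral is justified as you say.
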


Note that as $t\to\pm\infty$ the function $e^{x\cosh(t)}$ tends to
infinity \emph{doubly exponentially}, and since $S_k(X)=P_k(X)/(X-1)^{k+1}$,
the integrand tends to $0$ doubly-exponentially. This is exactly the context of
\emph{doubly exponential integration}, except that here there is no
change of variable to be done. The basic theorem, due to Takahashi and Mori,
states that the fastest way to compute this integral is as a Riemann sum
$h\sum_{-N\le j\le N}R_k(jh)$, where $R_k$ is the integrand and $h$ and $N$
are chosen appropriately (we do not need the theorem since we compute
the errors explicitly, but it is reassuring that we do not have a better way).
An easy study both of the speed of doubly-exponential
decrease and of the Euler--MacLaurin error made in approximating the
integral by Riemann sums gives the following:

\begin{proposition} Set $R_k(x)=S_k(e^{x\cosh(t)})\cosh(kt)$, where
  $S_k$ is given by Proposition \ref{propsk}. Let $B>0$ and set
  $C=B+k\log(x)/\log(2)+1$, $D=C\log(2)+2.06$, $E=2((C-1)\log(2)+\log(k!))/x$,
  $T=\log(E)(1+(2k/x)/E)$, $N=\lceil(T/\pi^2)(D+\log(D/\pi^2))\rceil$, and
  $h=T/N$. There exists a small (explicit) constant $c_k$ such that
  $$|U_k(x)-x^kh(R_k(0)/2+\sum_{1\le j\le N}R_k(hj))|<c_k2^{-B}\;.$$
\end{proposition}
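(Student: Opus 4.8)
The plan is to read $x^{k}h\bigl(R_{k}(0)/2+\sum_{1\le j\le N}R_{k}(hj)\bigr)$ as a \emph{truncated trapezoidal sum} for the integral in the preceding lemma, and to bound the two resulting errors separately. By that lemma $U_{k}(x)=\tfrac{x^{k}}{2}\int_{-\infty}^{\infty}R_{k}(t)\,dt$ with $R_{k}(t)=S_{k}(e^{x\cosh t})\cosh(kt)$, and $R_{k}$ is even in $t$, so the full bi-infinite trapezoidal sum with step $h$ is $2h\bigl(R_{k}(0)/2+\sum_{j\ge1}R_{k}(hj)\bigr)$; hence
$$U_{k}(x)-x^{k}h\Bigl(R_{k}(0)/2+\sum_{1\le j\le N}R_{k}(hj)\Bigr)=\underbrace{\tfrac{x^{k}}{2}\Bigl(\int_{\mathbb{R}}R_{k}-h\!\!\sum_{j\in\mathbb{Z}}\!R_{k}(hj)\Bigr)}_{\text{discretization error}}+\underbrace{x^{k}h\!\!\sum_{j>N}\!R_{k}(hj)}_{\text{truncation error}}.$$
Two facts about $S_{k}$ are used throughout: writing $S_{k}(X)=P_{k}(X)/(X-1)^{k+1}$, one has $\deg P_{k}=k$ (induction on the recursion), so $|S_{k}(X)|\le C_{k}/X$ for $X$ bounded away from $1$; and $P_{k}(1)=k!$ (also by the recursion, since $P_{k+1}(1)=(k+1)P_{k}(1)$), so $S_{k}$ has a pole of order exactly $k+1$ at $X=1$ with leading Laurent coefficient $k!$.

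For the truncation error I would use the doubly-exponential decay. For $|hj|\ge T$ we have $|R_{k}(hj)|\le C_{k}e^{-x\cosh(hj)}\cosh(khj)\le C_{k}e^{-\frac{x}{2}e^{hj}+khj}$, and since $t\mapsto\tfrac{x}{2}e^{t}-kt$ is increasing for $t\ge\log(2k/x)$ — a range containing $[T,\infty)$ in the parameter regime of interest — the tail is dominated by a geometric-type series, giving $x^{k}h\sum_{j>N}R_{k}(hj)\ll_{k}x^{k}e^{-(\frac{x}{2}e^{T}-kT)}$. Now unwind the definitions: from $C-1=B+k\log x/\log2$ one gets $\tfrac{x}{2}E=(C-1)\log2+\log k!$, and from $e^{T}=E^{\,1+(2k/x)/E}\ge E+\tfrac{2k}{x}\log E$ together with $\log E\le T$ one obtains $\tfrac{x}{2}e^{T}-kT\ge (C-1)\log2+\log k!-O_{k}(1)$; hence $x^{k}e^{-(\frac{x}{2}e^{T}-kT)}\ll_{k}x^{k}\cdot 2^{-(C-1)}/k!=x^{k}\cdot x^{-k}2^{-B}/k!\ll_{k}2^{-B}$, using $2^{-(C-1)}=2^{-B}x^{-k}$ and keeping $x$ in the range relevant to the application (so that $E$ and $\log E$ stay controlled).

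For the discretization error I would use the Euler--Maclaurin / Poisson-summation viewpoint. Since $e^{x\cosh t}=1$ forces $\cosh t\in\tfrac{2\pi i}{x}\mathbb{Z}$, hence $\Re\cosh t=0$, hence $\Im t\in\tfrac{\pi}{2}+\pi\mathbb{Z}$, the function $R_{k}$ is holomorphic in the strip $|\Im t|<\pi/2$ and decays doubly-exponentially in $\Re t$ uniformly on each substrip $|\Im t|\le d<\pi/2$. By Poisson summation the discretization error equals $\tfrac{x^{k}}{2}\sum_{m\ne0}\widehat{R_{k}}(m/h)$, and shifting the contour to $\Im t=\pm d$ gives $|\widehat{R_{k}}(\xi)|\le M(d)\,e^{-2\pi d|\xi|}$; because the boundary poles $t=a_{m}\pm i\pi/2$ (with $\sinh a_{m}=2\pi m/x$) have order $k+1$ and strengths $\asymp k!/(x\cosh a_{m})^{k+1}$, which are summable over $m$, one gets $M(d)\ll_{k}C_{k}(x)(\pi/2-d)^{-k}$, and optimizing $d\uparrow\pi/2$ yields a bound of shape $x^{k}C_{k}(x)\,h^{-k}e^{-\pi^{2}/h}$. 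The choice $N=\lceil(T/\pi^{2})(D+\log(D/\pi^{2}))\rceil$ forces $\pi^{2}/h=\pi^{2}N/T\ge D+\log(D/\pi^{2})$, i.e. $e^{-\pi^{2}/h}\le(\pi^{2}/D)e^{-D}=(\pi^{2}/D)e^{-2.06}\,2^{-B-1}x^{-k}$ (using $D=C\log2+2.06$ and $2^{-C}=2^{-B-1}x^{-k}$); the logarithmic term $\log(D/\pi^{2})$ in $N$ is exactly what is there to absorb the polynomial-in-$1/h$ prefactor created by the order-$(k+1)$ boundary poles, and assembling the pieces bounds this contribution by $c_{k}''\,2^{-B}$ with $c_{k}''$ explicit. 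Adding the two estimates gives the claim with $c_{k}=c_{k}'+c_{k}''$.

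The main obstacle is the discretization term: all the subtlety lies in producing a \emph{clean, explicit} Euler--Maclaurin bound despite the fact that the maximal strip of holomorphy of $R_{k}$ is bounded precisely by the order-$(k+1)$ poles of $S_{k}$, so one must track carefully how those poles degrade the nominal $e^{-\pi^{2}/h}$ rate and verify that the correction built into $N$ compensates it, while keeping every auxiliary constant ($c_{k}$ and the implied constants above) explicit and uniform in the relevant range of $x$. The truncation estimate, by contrast, is a routine doubly-exponential tail bound once the definitions of $T$ and $E$ are unwound.
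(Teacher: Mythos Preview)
Your approach is exactly the one the paper sketches: the paper does not give a detailed proof but only says that ``an easy study both of the speed of doubly-exponential decrease and of the Euler--MacLaurin error made in approximating the integral by Riemann sums'' yields the stated constants, and you have correctly split the error into a truncation tail (handled by the doubly-exponential decay of $R_k$) and a discretization error (handled via Poisson summation / contour shift in the strip $|\Im t|<\pi/2$), unwinding the definitions of $C,D,E,T,N$ in each case to land on $c_k2^{-B}$. Your identification of the poles of $R_k$ on $\Im t=\pm\pi/2$, their order $k+1$ and residue strength $\asymp k!$, and the role of the $\log(D/\pi^2)$ correction in $N$ to absorb the resulting polynomial prefactor, matches what is needed; the only remaining work, as you yourself flag, is to make the constant in the discretization bound fully explicit rather than $\ll_k$, but this is bookkeeping rather than a missing idea.
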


Note that in practice, since we need both, it is faster to compute
$U_k(x)$ and $U_{k-1}(x)$ simultaneously.

\subsection{Conclusion: Comparison of the Methods}

After explaining how to expand $f|_k\ga$ using products of two Eisenstein
series, we have given two methods to compute Petersson products. The first
is limited to integral weight $k\ge2$, while the second is applicable to
any $k$ integral or half-integral. In fact, the second method is applicable
to more general modular forms, for instance to modular forms with multiplier
system of modulus $1$ (such as $\eta(\tau)$ and more generally eta quotients),
since the only thing that we need is that $f(\tau)\ov{g(\tau)}y^k$ be
invariant by some subgroup of $\G$. For instance, this implies formulas such
as
$$\sum_{m\ge1,\ m\equiv\pm1\pmod{6}}\dfrac{m}{e^{2\pi m/\sqrt{6}}-1}=\dfrac{1}{12}\;,$$
which can easily be proved directly.

In both methods we need to compute the Fourier expansions of
$(f|_k\ga_c)_{c\in C}$ for a system of representatives $c$ of cusps,
the Fourier expansions of $f|_k\ga_j$ for a complete system of coset
representatives, necessary in the Haberland case, being trivially obtained
from those. This will be by far the most time-consuming part of the
methods. The computation of the integrals in the Haberland case, or of
the infinite series involving the transcendental function $W_k(x)$ in
the Nelson--Collins case will in fact require little time in comparison.

The main difference between the methods comes from the speed of convergence.
In the Haberland case, we have seen that the convergence is at worse in
$e^{-2\pi n/N}$ (when the width of the cusp is equal to $N$, for instance
for the cusp $0$), and for this to be less than $e^{-E}$, say, we need
$n>(E/(2\pi))N$, proportional to $E$. On the other hand, in the Nelson--Collins
case the convergence is at worse in $e^{-4\pi(n/N)^{1/2}}$, so here we need
$n>(E/(4\pi))^2N$, proportional to $E^2$. Thus this latter method is
considerably slower than the former, especially in high accuracy, hence must
be used only when Haberland-type methods are not applicable, in other words
in weight $1$ and half-integral weight.

As a typical timing example, in level $96$, weight $4$, computing a Petersson
product at 19 decimal digits (using Haberland) requires $1.29$s and at 38
decimal digits $2.27$s. On the other hand, in level $96$ weight $5/2$,
computing a Petersson product at 19 decimal digits (using Nelson--Collins)
requires $3.56$s, but at 38 decimal digits $16.2$s.

\end{document}